\newtheorem{theorem}{Theorem}[section]
\newtheorem{lemma}[theorem]{Lemma}
\newtheorem{definition}[theorem]{Definition}
\newtheorem{proposition}[theorem]{Proposition}
\newtheorem{corollary}[theorem]{Corollary}
\newtheorem{example}[theorem]{Example}
\newtheorem{remark}[theorem]{Remark}
\theoremstyle{definition}
\newcommand\pf{\begin{proof}}
\newcommand\epf{\end{proof}}
\newcommand\Irr{\mathrm{Irr}}
\numberwithin{equation}{section}
\title{Half-commutative orthogonal Hopf algebras}
\author{Julien Bichon}
\address{J. Bichon: 
Laboratoire de Math\'ematiques,
Universit\'e Blaise Pascal, Campus des c\'ezeaux BP 80026,
63171~Aubi\`ere Cedex, France}
\email{Julien.Bichon@math.univ-bpclermont.fr}
\author{Michel Dubois-Violette}
\address{M. Dubois-Violette:
Laboratoire de Physique Th\'eorique,
B\^atiment 210,
Universit\'e Paris XI, 
91405 Orsay Cedex, France}
\email{Michel.Dubois-Violette@u-psud.fr}
\subjclass[2010]{16T05, 20G42, 22C05}
\begin{document}

\maketitle

\begin{abstract}
A half-commutative orthogonal Hopf algebra is a Hopf $*$-algebra generated by the self-adjoint coefficients
of an orthogonal matrix corepresentation $v=(v_{ij})$ that half commute in the sense that $abc=cba$ 
for any $a,b,c \in \{v_{ij}\}$. The first non-trivial such Hopf algebras were discovered by Banica and Speicher. We propose a general procedure, based on a crossed product construction, that associates to a  self-transpose compact subgroup $G \subset U_n$ 
a half-commutative  orthogonal Hopf algebra $\mathcal A_*(G)$. It is shown 
that any half-commutative orthogonal Hopf algebra arises in this way. 
The fusion rules of $\mathcal A_*(G)$
are expressed in term of those of $G$.
\end{abstract}

\section{introduction}

The half-liberated orthogonal quantum group $O_n^*$ were recently discovered by Banica and Speicher
\cite{bsp}. These are compact quantum groups in the sense of Woronowicz \cite{wo1}, and the
corresponding Hopf $*$-algebra $A_o^*(n)$ is the universal $*$-algebra presented by self-adjoint generators $v_{ij}$ submitted to the relations making $v=(v_{ij})$ an orthogonal matrix  and to the half-commutation relations
$$abc = cba, \  a,b,c \in \{v_{ij}\}$$
The half-commutation relations arose, via Tannaka duality, from a deep study of certain tensor subcategories of the category of partitions, see \cite{bsp}. More examples of Hopf algebras
with generators satisfying the half-commutation relations were given in \cite{bcs}.

The representation theory of $O_n^*$ was discussed in \cite{bve}, where strong links with the representation theory of the unitary group $U_n$ were found. It followed that the fusion rules of $O_n^*$ are non-commutative if $n\geq 3$. Moreover a matrix model $A_o^*(n) \hookrightarrow M_2(\mathcal R(U_n))$ was found in \cite{bcs2}.

The aim of this paper is to continue these works by a general study of what we call  half-commutative orthogonal Hopf algebras:  Hopf $*$-algebras generated by the self-adjoint coefficients
of an orthogonal matrix corepresentation $v=(v_{ij})$ whose coefficients satisfy the previous half-commutation relations.
Our main results are as follows.
\begin{enumerate}
 \item To any self-transpose compact subgroup $G \subset U_n$ 
we associate a half-commutative orthogonal Hopf algebra $\mathcal A_*(G)$, with 
$\mathcal A_*(U_n) \simeq A_o^*(n)$. The Hopf algebra $\mathcal A_*(G)$ is a Hopf $*$-subalgebra
of the crossed product $\mathcal R(G) \rtimes \mathbb C \mathbb Z_2$, where the action of $\mathbb Z_2$ of $\mathcal R(G)$ is induced by the transposition. 
\item Conversely we show that any noncommutative half-commutative orthogonal Hopf algebra arises from the previous construction for some compact group $G \subset U_n$.
\item We show that the fusion rules of $\mathcal A_*(G)$ can be described in terms of those of $G$.
\end{enumerate}

 Therefore it follows from our study that quantum groups arising from half-commutative orthogonal Hopf algebras are objects that are very close from classical groups. This was suggested by 
 the representation theory results from \cite{bve}, by the matrix model found in the ``easy'' case in \cite{bcs2} and by the results of \cite{bbcc} where it was shown
 that the quantum group inclusion $O_n \subset O_n^*$ is maximal.
 The techniques from \cite{bbcc}, and especially the short five lemma for cosemisimple Hopf algebras, are used in essential way here. The use of versions of the five lemma for  Hopf algebras was initiated in   \cite{aga}.
 

 The paper is organized as follows. In Section 2 we fix some notation and recall the necessary background.
 In Section 3 we formally introduce half-commutative orthogonal Hopf algebras, and recall the early examples from \cite{bsp,bcs}. Section 4 is devoted to our main construction, which associates
 to a self-transpose compact subgroup $G \subset U_n$ a half-commutative orthogonal Hopf algebra $\mathcal A_*(G)$, and we show that any half-commutative orthogonal Hopf algebra arises in this way.
 At the end of the section we use our construction to propose a possible orthogonal half-liberation of the unitary group $U_n$.
 In Section 5 we describe the fusion rules of $\mathcal A_*(G)$ in terms of those of $G$.
 
We assume that the reader is familiar with Hopf algebras \cite{mo}, Hopf $*$-algebras and with the algebraic approach (via algebras of representative functions) to compact quantum groups \cite{diko,ks}.

\section{preliminaries}

\subsection{Classical groups.} We first fix some notation. As usual, the group
of complex $n \times n$ unitary matrices is denoted by $U_n$, while $O_n$ denotes the group of 
real orthogonal matrices. We denote by $\mathbb T$ the subgroup of $U_n$ consisting of scalar matrices, and by $PU_n$ the quotient group $U_n/\mathbb T$. 

We shall need the following notions.

\begin{definition}\label{defclassical}
 Let $G \subset U_n$ be a compact subgroup.
 \begin{enumerate}
  \item We say that $G$ is self-transpose if $\forall g \in G$, we have $g^t \in G$.
  \item We say that $G$ is non-real if $G \not \subset O_n$, i.e. if there exists $g \in G$ with $g_{ij} \not \in \mathbb R$, for some $i,j$.
  \item We say that $G$ is doubly non-real if  there exists $g \in G$ with $g_{ij}\overline{g_{kl}} \not \in \mathbb R$, for some $i,j,k,l$.
 \end{enumerate} 
\end{definition}

Note that the subgroup $\tilde{O_n} = \mathbb T  O_n\subset U_n$ (considered in \cite{bbcc}) is non-real but is not doubly non-real.

\subsection{Orthogonal and unitary Hopf algebras.} In this subsection we recall some definitions on the algebraic approach to compact quantum groups. We work at the level of Hopf $*$-algebras of representative
functions. The following simple key definition arose from Woronowicz' work \cite{wo1}.

\begin{definition}
A \textit{unitary Hopf algebra} is a $*$-algebra $A$ which is generated by elements $\{u_{ij}|1 \leq i,j \leq n\}$ such that the matrices $u = (u_{ij})$ and $\overline{u}=(u_{ij}^*)$ are unitaries, and such that:
\begin{enumerate}
 \item There is a $*$-algebra map $\Delta:A \to A \otimes A$ such that $\Delta(u_{ij}) = \sum_{k=1}^n u_{ik} \otimes u_{kj}$.

\item There is a $*$-algebra map $\varepsilon:A \to \mathbb C$ such that $\varepsilon(u_{ij}) = \delta_{ij}$.

\item There is a $*$-algebra map $S:A \to A^{op}$ such that $S(u_{ij}) = u_{ji}^*$.
\end{enumerate}
If $u_{ij} = u_{ij}^*$ for $1 \leq i,j \leq n$, we say that $A$ is an \textit{orthogonal Hopf algebra}.
\end{definition}

It follows that $\Delta,\varepsilon,S$ satisfy the usual Hopf $*$-algebra axioms and that $u=(u_{ij})$ is a matrix corepresentation of $A$. Note that the definition forces that a unitary Hopf algebra is of Kac type, i.e. $S^2={\rm id}$. The motivating examples of unitary (resp. orthogonal) Hopf algebra is $A = \mathcal R(G)$, the algebra of representative functions on a compact subgroup $G \subset U_n$ (resp. $G \subset O_n$).  Here the standard generators $u_{ij}$ are the coordinate functions which take a matrix to its $(i,j)$-entry.

In fact every commutative unitary Hopf algebra is of the form $\mathcal R  (G)$ for some unique compact group $G \subset U_n$ defined by $G = {\rm Hom}_{*-alg}(A, \mathbb C)$ (this the Hopf algebra version of the Tannaka-Krein theorem).
This motivates the  notation ``$A = \mathcal R(G)$'' for any unitary (resp. orthogonal) Hopf algebra, where $G$ is a \textit{unitary (resp. orthogonal) compact quantum group}. 

The universal examples of unitary and orthogonal Hopf algebras are as follows \cite{wa1}.

\begin{definition}
The universal unitary Hopf algebra $A_u(n)$ is the universal $*$-algebra generated by  elements $\{u_{ij}|1 \leq i,j \leq n\}$ such that the matrices
$u = (u_{ij})$ and $\overline{u}=(u_{ij}^*)$ in $M_n(A_u(n))$ are unitaries.

The universal orthogonal Hopf algebra $A_o(n)$ is the universal $*$-algebra generated by self-adjoint elements $\{u_{ij}|1 \leq i,j \leq n\}$ such that the matrix $u = (u_{ij})_{1 \leq i,j \leq n}$ in $M_n(A_o(n))$ is orthogonal.
\end{definition}

The existence of the Hopf $*$-algebra structural morphisms follows from the universal properties of $A_u(n)$ and $A_o(n)$.  As discussed above, we use the notations $A_u(n)= \mathcal R(U_n^+)$ and 
$A_o(n) = \mathcal R(O_n^+)$, where $U_n^+$ is the \textit{free unitary quantum group} and $O_n^+$ is the \textit{free orthogonal quantum group}.

The Hopf $*$-algebra $A_u(n)$ was introduced by Wang \cite{wa1}, while 
the Hopf algebra $A_o(n)$ was defined first in \cite{dvl} under the notation
$\mathcal A(I_n)$, and was then defined independently in \cite{wa1} in the compact quantum group framework.

\subsection{Exact sequences of Hopf algebras}
In this subsection we recall some facts on exact sequences of Hopf algebras. 

\begin{definition}\label{prex}
A sequence of Hopf algebra maps
$$\mathbb C \to B\overset{i}\to A\overset{p}\to L\to\mathbb C$$
is called pre-exact if $i$ is injective, $p$ is surjective and $i(B)=A^{cop}$, where:
$$A^{cop}=\{a\in A|(id\otimes p)\Delta(a)=a\otimes 1\}$$
\end{definition}

A pre-exact sequence as in Definition \ref{prex} is said to be exact \cite{ade} if in addition we have $i(B)^+A=\ker(p)=Ai(B)^+$, where $i(B)^+=i(B)\cap\ker(\varepsilon)$. For the kind of sequences 
to be considered in this paper, pre-exactness is actually equivalent to exactness.

The following lemma, that we record for future use, is Proposition 3.2 in \cite{bbcc}.

\begin{lemma}\label{ex1}
Let $A$ be an orthogonal Hopf algebra with  generators $u_{ij}$. Assume that we have surjective Hopf algebra map $p:A\to\mathbb C\mathbb Z_2$, $u_{ij}\to\delta_{ij}g$, where $<g>=\mathbb Z_2$. Let $P_uA$ be the subalgebra generated by the elements $u_{ij}u_{kl}$ with the inclusion $i:P_uA \subset A$. Then the sequence
$$\mathbb C\to P_uA\overset{i}\to A\overset{p}\to\mathbb C\mathbb Z_2 \to
\mathbb C$$
is pre-exact.
\end{lemma}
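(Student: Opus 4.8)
The plan is to verify the three conditions in Definition \ref{prex} for the given sequence. Write $B = P_uA$ and $L = \mathbb C\mathbb Z_2 = \mathbb C\langle g\rangle$ with $g^2 = 1$. First I would check that $i$ is injective, which is trivial since $i$ is the inclusion of a subalgebra. Next, $p$ is surjective: the image of $p$ is a subalgebra of $\mathbb C\mathbb Z_2$ containing $p(u_{11}) = g$, hence all of $\mathbb C\mathbb Z_2$. The real content is the identification $i(B) = A^{cop}$, where $A^{cop} = \{a \in A \mid (\id \otimes p)\Delta(a) = a \otimes 1\}$.

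For the inclusion $i(B) \subseteq A^{cop}$, I would compute directly: for a generator $u_{ij}u_{kl}$ of $B$,
$$(\id \otimes p)\Delta(u_{ij}u_{kl}) = \sum_{r,s} u_{ir}u_{ks} \otimes p(u_{rj})p(u_{sl}) = \sum_{r,s} u_{ir}u_{ks} \otimes \delta_{rj}\delta_{sl}\, g^2 = u_{ij}u_{kl} \otimes 1,$$
using $g^2 = 1$. Since $A^{cop}$ is a subalgebra, it contains all products of such elements, so $B \subseteq A^{cop}$. The reverse inclusion $A^{cop} \subseteq i(B)$ is the main obstacle. Here I would use a grading argument: the Hopf algebra map $p: A \to \mathbb C\mathbb Z_2$ equips $A$ with a $\mathbb Z_2$-grading $A = A_0 \oplus A_1$, where $A_\eps$ is the subspace on which $(\id \otimes p)\Delta$ acts as $a \mapsto a \otimes g^\eps$; this is precisely the grading in which each $u_{ij}$ is homogeneous of degree $1$. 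One checks that $A^{cop} = A_0$, the degree-zero component, which is spanned by products $u_{i_1j_1}\cdots u_{i_{2m}j_{2m}}$ of even length. The claim then reduces to showing that any even-length product of generators lies in the subalgebra $B$ generated by the length-two products $u_{ij}u_{kl}$ — and this is immediate by grouping the $2m$ factors into $m$ consecutive pairs.

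I expect the only subtlety to be making the grading argument rigorous: one must verify that the decomposition $A = A_0 \oplus A_1$ into the $g$-eigenspaces of $(\id \otimes p)\Delta$ is a genuine algebra grading (which follows since $\Delta$ and $p$ are algebra maps and $\mathbb C\mathbb Z_2$ is commutative cocommutative), that it is spanned by monomials in the $u_{ij}$ of the appropriate parity (which follows since $A$ is generated by the $u_{ij}$ and each is homogeneous of degree $1$), and that $A^{cop}$ coincides with $A_0$ rather than being a priori larger. Once this structural fact is in place, both the containment $B \subseteq A^{cop} = A_0$ and the reverse $A_0 \subseteq B$ are formal, and pre-exactness follows. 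This is essentially the argument of Proposition 3.2 in \cite{bbcc}, so I would simply cite it, but the sketch above records the mechanism.
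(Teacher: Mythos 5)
Your proposal is correct and is exactly the argument the paper relies on: the paper gives no proof of this lemma, citing Proposition 3.2 of \cite{bbcc}, and your sketch (the $\mathbb Z_2$-comodule algebra grading induced by $p$, the identification $A^{cop}=A_0$ with the span of even-length monomials, and the pairing of factors to land in $P_uA$) is precisely that standard argument. The points you flag as needing care are the right ones, and they all go through since a comodule over $\mathbb C\mathbb Z_2$ is the same as a $\mathbb Z_2$-graded vector space and $(\id\otimes p)\Delta$ is an algebra map.
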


Exact sequences of compact groups induce exact sequences of Hopf algebras. In particular
if $G \subset U_n$ is a compact subgroup, we have an exact sequence of compact groups
$$1\to G \cap \mathbb T \to G \to G /G \cap \mathbb T  \to 1$$
that induces an exact sequence of Hopf algebras
$$\mathbb C\to \mathcal R(G/G \cap \mathbb T) \to \mathcal R(G) \to \mathcal R(G \cap \mathbb T)  \to
\mathbb C$$
We will use the following probably well-known lemma. We sketch a proof for the sake of completeness.

\begin{lemma}\label{pun}
Let $G \subset U_n$ be a compact subgroup. Then $\mathcal R(G/G \cap \mathbb T)$
is the subalgebra of $\mathcal R(G)$ generated by the elements $u_{ij}u_{kl}^*$, $i,j,k, l \in \{1, \ldots , n\}$. Moreover, if $G=U_n$, then $\mathcal R(PU_n) = \mathcal R(U_n/ \mathbb T)$ is isomorphic with the commutative $*$-algebra presented by generators $w_{ij,kl}$, $1 \leq i,j,k,l \leq n$ and submitted to the relations
$$\sum_{j=1}^n w_{ik,jj}= \delta_{ik} = \sum_{j=1}^n w_{jj,ik}, \ w_{ij,kl}^*=w_{ji,lk}$$
$$\sum_{k,l=1}^n w_{ij,kl}w_{pq,kl}^* = \delta_{ip}\delta_{jq}$$
The isomorphism is given by $w_{ij,kl} \longmapsto u_{ik}u_{jl}^*$.
\end{lemma}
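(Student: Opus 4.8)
The plan is to split the statement into two parts: first identify $\mathcal R(G/G\cap\mathbb T)$ as the stated subalgebra of $\mathcal R(G)$, and then, in the special case $G=U_n$, give the presentation by generators and relations.

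For the first part, write $B$ for the subalgebra of $\mathcal R(G)$ generated by the elements $u_{ij}u_{kl}^*$. The inclusion $B\subset\mathcal R(G/G\cap\mathbb T)$ is immediate: a function of the form $g\mapsto g_{ij}\overline{g_{kl}}$ is constant on cosets of $G\cap\mathbb T$, since multiplying $g$ by a scalar $\lambda\in\mathbb T$ multiplies $g_{ij}\overline{g_{kl}}$ by $\lambda\overline{\lambda}=1$; and $\mathcal R(G/G\cap\mathbb T)$ is exactly the subalgebra of $\mathcal R(G)$ of functions invariant under the translation action of $G\cap\mathbb T$ (this is the standard description of representative functions on a quotient by a closed normal subgroup, which by hypothesis we may assume). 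For the reverse inclusion, I would argue that $B$ separates the points of $G/G\cap\mathbb T$ and is closed under the $*$-operation and contains the constants, so that by Stone--Weierstrass its closure is all of $C(G/G\cap\mathbb T)$; combined with the fact that $B$ is spanned by matrix coefficients of corepresentations (the elements $u_{ij}u_{kl}^*$ are coefficients of $v\otimes\bar v$, hence $B$ is a subcoalgebra consisting of representative functions), Peter--Weyl-type density forces $B=\mathcal R(G/G\cap\mathbb T)$. The one point to check carefully is separation of points: if $g,h\in G$ have $g_{ij}\overline{g_{kl}}=h_{ij}\overline{h_{kl}}$ for all $i,j,k,l$, one must deduce $h=\lambda g$ for some $\lambda\in\mathbb T$. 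Fixing $k,l$ with $g_{kl}\neq0$ and varying $i,j$, the ratios show $h_{ij}=\mu\, g_{ij}$ for a fixed scalar $\mu$ depending only on the chosen $k,l$; since both $g$ and $h$ are unitary, $|\mu|=1$, giving $h\in\mathbb T g$.

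For the second part, let $C$ be the commutative $*$-algebra presented by the generators $w_{ij,kl}$ and the listed relations. One checks first that $w_{ij,kl}\mapsto u_{ik}u_{jl}^*$ defines a $*$-algebra map $\phi\colon C\to\mathcal R(U_n)$: the relations $\sum_j w_{ik,jj}=\delta_{ik}=\sum_j w_{jj,ik}$ go over to $\sum_j u_{ij}u_{kj}^*=\delta_{ik}$ and $\sum_j u_{ji}u_{jk}^*=\delta_{ik}$, which hold because $u$ and $\bar u$ are unitary; the relation $w_{ij,kl}^*=w_{ji,lk}$ is immediate; and $\sum_{k,l}w_{ij,kl}w_{pq,kl}^*=\delta_{ip}\delta_{jq}$ becomes $\sum_{k,l}u_{ik}u_{jl}^*u_{pk}^*u_{ql}$, which using commutativity of $\mathcal R(U_n)$ equals $\bigl(\sum_k u_{ik}u_{pk}^*\bigr)\bigl(\sum_l u_{jl}^*u_{ql}\bigr)=\delta_{ip}\delta_{jq}$. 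By the first part, the image of $\phi$ is exactly $\mathcal R(PU_n)$, so it remains to prove $\phi$ is injective. For this I would construct an inverse on the image: equip $C$ with the obvious Hopf $*$-algebra structure (comultiplication $\Delta(w_{ij,kl})=\sum_{p,q}w_{ip,kq}\otimes w_{pj,ql}$, counit $\varepsilon(w_{ij,kl})=\delta_{ij}\delta_{kl}$, and the antipode read off from $S(u_{ik}u_{jl}^*)$), check these are well defined on the presentation, and observe that $\phi$ is then a Hopf $*$-algebra map. Since $C$ is commutative it is $\mathcal R(K)$ for the compact group $K$ of $*$-characters, and $\phi$ dualizes to a continuous injective homomorphism $PU_n\hookrightarrow K$; the content is to show this is onto, equivalently that $\phi$ is injective, which follows once one exhibits enough characters of $C$—concretely, any $*$-character $\chi$ of $C$ assigns to the matrix $(w_{ij,kl})$, reorganized as an $n^2\times n^2$ matrix, a value satisfying the relations forcing it to be of the form $a_{ik}\overline{a_{jl}}$ for a unitary matrix $a=(a_{ik})$ (this is where the quadratic relation $\sum_{k,l}w_{ij,kl}w_{pq,kl}^*=\delta_{ip}\delta_{jq}$ together with the trace-type relations is used to recognize a rank-one-up-to-phase structure).

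The main obstacle is this last injectivity/surjectivity argument: one must show the relations in the presentation are \emph{complete}, i.e. that every $*$-character of $C$ genuinely comes from a point of $PU_n$ and no more. The cleanest route is the reconstruction of a unitary $a$ from the numbers $\lambda_{ij,kl}=\chi(w_{ij,kl})$: the relation $\lambda_{ij,kl}^*=\lambda_{ji,lk}$ says the $n^2\times n^2$ matrix is self-adjoint up to the index swap, the quadratic relation says it is a partial isometry of the right rank, and the linear relations pin down the normalization; extracting a square root of the resulting rank-one-type positive matrix yields $a$, unique up to an overall phase, which is precisely the ambiguity $\mathbb T\subset U_n$. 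Once $a\in U_n$ is produced with $\chi=\mathrm{ev}_{[a]}\circ\phi$, injectivity of $\phi$ follows because characters separate points of the commutative algebra $C$.
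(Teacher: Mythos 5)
Your treatment of the first assertion is correct, but it takes a genuinely different route from the paper. You argue classically: the subalgebra $B$ generated by the $u_{ij}u_{kl}^*$ sits inside the $G\cap\mathbb T$-invariant functions, separates the points of $G/G\cap\mathbb T$ (your computation $h=\mu g$ with $|\mu|=1$ is right; one should add that $\mu I\in G\cap\mathbb T$ because $hg^{-1}\in G$), hence is dense in $C(G/G\cap\mathbb T)$ by Stone--Weierstrass, and being a subcoalgebra (it is spanned by coefficients of the corepresentations $(v\otimes\bar v)^{\otimes k}$) it must exhaust $\mathcal R(G/G\cap\mathbb T)$ by Schur orthogonality. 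The paper instead stays entirely Hopf-algebraic: it forms the quotient $\mathcal R(G)/B^+\mathcal R(G)$ with projection $\rho$, shows $\mathcal R(G)^{\mathrm{co}\,p}\subset\mathcal R(G)^{\mathrm{co}\,\rho}$, and invokes faithful flatness and Takeuchi's correspondence to conclude $\mathcal R(G)^{\mathrm{co}\,\rho}=B$. Your argument is more elementary and self-contained; the paper's has the virtue of following the same pattern used later for genuinely noncommutative Hopf algebras.

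The second half has a genuine gap, at exactly the point you flag as ``the main obstacle.'' Your reconstruction claims that the four listed relations force $\lambda_{ij,kl}=a_{ik}\overline{a_{jl}}$ for some unitary $a$. They do not: the assignment $\lambda_{ij,kl}=\delta_{il}\delta_{jk}$ (the transpose map of $M_n(\mathbb C)$, read through $e_{kl}\mapsto\sum_{i,j}\lambda_{ij,kl}e_{ij}$) satisfies all of $\sum_j\lambda_{ik,jj}=\delta_{ik}=\sum_j\lambda_{jj,ik}$, $\overline{\lambda_{ij,kl}}=\lambda_{ji,lk}$ and $\sum_{k,l}\lambda_{ij,kl}\overline{\lambda_{pq,kl}}=\delta_{ip}\delta_{jq}$, yet it is not of the form $a_{ik}\overline{a_{jl}}$ (for $n\geq 2$ that form would force $|a_{12}|=1$ and $a_{12}=0$ simultaneously). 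So the quadratic relation only says the $n^2\times n^2$ matrix is unitary; there is no rank-one positive Choi-type matrix to extract a square root from. What the paper's one-line proof actually uses is that $PU_n=\mathrm{Aut}(M_n(\mathbb C))$, and encoding that as relations requires the multiplicativity identities $\sum_j w_{ij,kl}\,w_{jq,k'l'}=\delta_{lk'}\,w_{iq,kl'}$, which do hold for the elements $u_{ik}u_{jl}^*$ but are not among the listed relations and, by the same counterexample, do not follow from them. Once multiplicativity is included, your strategy not only works but simplifies: a character then yields a unital $*$-endomorphism of the simple algebra $M_n(\mathbb C)$, which is automatically an automorphism, hence $\mathrm{Ad}_a$ by Skolem--Noether with $a$ unique up to a phase in $\mathbb T$. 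As written, however, your reconstruction step cannot be completed, and the obstruction you ran into lies in the list of relations itself rather than in your handling of them.
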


\begin{proof}
Let $p : \mathcal R(G) \longrightarrow \mathcal R(G\cap \mathbb T)$
 be the restriction map. It is clear ${\rm Ker}(p)$ is generated as a $*$-ideal by the elements $u_{ij}$, $i \not =j$,
 and $u_{ii}-u_{jj}$. 
 Let $B$ be the subalgebra generated by the elements $u_{ij}u_{kl}^*$.  Then $B$ is a Hopf $*$-subalgebra of $\mathcal R(G)$ and it is clear that
 $B \subset \mathcal R(G)^{cop}$. To prove the reverse inclusion
 we form the Hopf algebra quotient 
 $\mathcal R(G)// B = \mathcal R(G)/ B^+ \mathcal R(G)$ and denote by $\rho : \mathcal R(G) \longrightarrow \mathcal R(G)// B$ the canonical projection. It is not difficult to see
 that in $\mathcal R(G)// B$ we have $\rho(u_{ij})=0$ if $i\not=j$ and $\rho(u_{ii})=\rho(u_{jj})$ for any $i,j$. Hence there exists a Hopf $*$-algebra map $p' : \mathcal R(G /\mathbb T) \longrightarrow \mathcal R(G)// B$ such that $p' \circ p = \rho$. It follows that $\mathcal R(G)^{\rm cop} \subset \mathcal R(G)^{{\rm co}\rho}$. But since our algebras are commutative, $\mathcal R(G)$ is a faithfully flat $B$-module and hence by \cite{ta} (see also \cite{ade}) we have $\mathcal R(G)^{{\rm co}\rho} =B$, and hence  $\mathcal R(G/G \cap \mathbb T)=\mathcal R(G)^{\rm cop}=B$.
 
 The last assertion is just the reformulation of the standard fact that $PU_n$ is the automorphism group of the $*$-algebra $M_n(\mathbb C)$ (see e.g. \cite{wa3}).
\end{proof}



\section{Half-commutative Hopf algebras}

We now formally introduce half-commutative orthogonal Hopf algebras.
Of course the definition of half-commutativity can be given in a general context, as follows.
It was first formalized, in a probabilistic context, in \cite{bcs3}.

\begin{definition}
 Let $A$ be an algebra. We say that a family $(a_i)_{i\in I}$ of elements of $A$ half-commute
 if $abc=cba$ for any $a,b,c \in \{a_i, i \in  I\}$. The algebra $A$ is said to be half-commutative
 if it has a family of generators that half-commute. 
\end{definition}

At a Hopf algebra level, a reasonnable definition seems to be the following one.

\begin{definition}
 A half-commutative Hopf algebra is a Hopf algebra $A$ generated by the  coefficients
 of a matrix corepresentation $v=(v_{ij})$ whose coefficients half-commute.
 \end{definition}
 
 We will not study half-commutative Hopf algebras in this generality. 
 A reason for this is that it is unclear if the half-commutativity relations outside of the orthogonal case are the natural ones in the categorical framework of \cite{bsp}. Thus we will restrict to the following special
 case. 
 
 \begin{definition}
 A half-commutative orthogonal Hopf algebra is a Hopf $*$-algebra $A$ generated by the self-adjoint coefficients
 of an orthogonal matrix corepresentation $v=(v_{ij})$ whose coefficients half-commute.
\end{definition}

The first example is the universal one, defined in \cite{bsp}.

\begin{definition}
The half-liberated orthogonal Hopf algebra $A_o^*(n)$ is the universal $*$-algebra generated by self-adjoint elements $\{v_{ij}|1 \leq i,j \leq n\}$ which half-commute  and such that the matrix $v = (v_{ij})_{1 \leq i,j \leq n}$ in $M_n(A_o^*(n))$ is orthogonal.
\end{definition}

The existence of the Hopf algebra structural morphisms  follows from the universal property of $A_o^*(n)$, and hence $A_o^*(n)$ is a half-commutative orthogonal Hopf algebra.  We use the notation $A_o^*(n) = \mathcal R(O_n^*)$, where $O_n^*$ is the \textit{half-liberated orthogonal quantum group}.
Note that we have $\mathcal R(O_n^+) \twoheadrightarrow \mathcal R(O_n^*)  \twoheadrightarrow \mathcal R(O_n)$, i.e. $O_n \subset O_n^* \subset O_n^+$. At $n=2$ we have $O_2^*=O_2^+$, but for $n \geq 3$ these inclusions are strict.

Another example of half-commutative orthogonal Hopf algebra is the following one, taken  from \cite{bcs}.

\begin{definition}
 The half-liberated hyperoctaedral Hopf algebra $A_h^*(n)$ is the universal $*$-algebra generated by self-adjoint elements $\{v_{ij}|1 \leq i,j \leq n\}$ which half-commute, such that
 $v_{ij}v_{ik}=0=v_{ki}v_{ji}$ for $k\not=j$,   and such that the matrix $v = (v_{ij})_{1 \leq i,j \leq n}$ in $M_n(A_o^*(n))$ is orthogonal.
\end{definition}

Again the existence of the Hopf algebra structural morphisms follows from the universal property of $A_h^*(n)$, and hence $A_h^*(n)$ is a half-commutative orthogonal Hopf algebra. See \cite{bcs}
and \cite{we} for further examples.

The following lemma will be an important ingredient in the proof of the structure theorem of half-commutative orthogonal Hopf algebras.

\begin{lemma}\label{exacthalf}
Let $A$ be a half-commutative  orthogonal Hopf algebra
 generated by the self-adjoint coefficients
 of an orthogonal matrix corepresentation $v=(v_{ij})$ whose coefficients half-commute.
 Then $P_vA$ is a commutative Hopf $*$-subalgebra of $A$. If moreover
 $A$ is noncommutative then 
 there exists a Hopf $*$-algebra map $p : A \longrightarrow \mathbb C\mathbb Z_2$
 such that for any $i,j$, $p(v_{ij})= \delta_{ij}s$, where $\langle s \rangle= \mathbb Z_2$,  that induces a pre-exact sequence
  $$\mathbb C \to   P_vA \overset{i}\to  A \overset{p}\to \mathbb C\mathbb Z_2\to\mathbb C$$
\end{lemma}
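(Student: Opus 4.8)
The plan is to prove the two assertions separately: the commutativity of $P_vA$ is a short computation, while the existence of $p$ is obtained by identifying the quotient Hopf algebra $A\quot P_vA$. For the first assertion it suffices to check that the generators $v_{ij}v_{kl}$ of $P_vA$ commute with one another, and this follows from two uses of the half-commutation relation: for $a,b,c,d\in\{v_{ij}\}$,
$$ab\cdot cd=(abc)d=(cba)d=c(bad)=c(dab)=cd\cdot ab.$$
Hence $P_vA$ is commutative. It is a $*$-subalgebra because $(v_{ij}v_{kl})^*=v_{kl}v_{ij}\in P_vA$, a subcoalgebra because $\Delta(v_{ij}v_{kl})=\sum_{a,b}v_{ia}v_{kb}\otimes v_{aj}v_{bl}\in P_vA\otimes P_vA$ and $\varepsilon(v_{ij}v_{kl})=\delta_{ij}\delta_{kl}$, and stable under $S$ because $S(v_{ij}v_{kl})=v_{lk}v_{ji}\in P_vA$; moreover a computation of the same sort shows that $P_vA$ is a normal Hopf subalgebra (its image under the adjoint coaction of $A$ lands in $A\otimes P_vA$). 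So $P_vA$ is a commutative normal Hopf $*$-subalgebra of $A$, and $L:=A\quot P_vA=A/(P_vA)^+A$ is a genuine quotient Hopf $*$-algebra, with projection $p\colon A\to L$.

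Next I would analyse $L$. Since every $b\in P_vA$ maps to $\varepsilon(b)1$ in $L$, the relations $v_{ij}v_{kl}\in P_vA$ give $p(v_{ij})p(v_{kl})=\delta_{ij}\delta_{kl}1$ in $L$ for all indices. Taking $l=k$ and multiplying by $p(v_{kk})$ (invertible since $p(v_{kk})^2=1$) yields $p(v_{ij})=\delta_{ij}p(v_{kk})$; hence $p(v_{ij})=0$ for $i\neq j$ and all the $p(v_{ii})$ coincide, say with a single element $s$, with $s=s^*$ and $s^2=1$. Applying $p\otimes p$ to $\Delta(v_{11})=\sum_k v_{1k}\otimes v_{k1}$ shows $\Delta_L(s)=s\otimes s$, so $s$ is grouplike. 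Since $L\neq 0$ (the counit of $A$ descends to it) and $L$ is generated by the $p(v_{ij})$, we conclude $L\cong\mathbb{C}$ if $s=1$, and $L\cong\mathbb{C}\mathbb{Z}_2$ if $s\neq 1$.

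Now assume $A$ is noncommutative. The case $L\cong\mathbb{C}$ cannot occur: it would mean $(P_vA)^+A=A^+$, and since $A$ is cosemisimple it is faithfully flat over the Hopf subalgebra $P_vA$, whence $P_vA=A^{\mathrm{co}\,L}=A$, contradicting the commutativity of $P_vA$. Therefore $L\cong\mathbb{C}\mathbb{Z}_2$, and $p\colon A\to\mathbb{C}\mathbb{Z}_2$ is a surjective Hopf $*$-algebra map with $p(v_{ij})=\delta_{ij}s$ and $\langle s\rangle=\mathbb{Z}_2$. Applying Lemma~\ref{ex1} to $p$ then gives at once that $\mathbb{C}\to P_vA\overset{i}\to A\overset{p}\to\mathbb{C}\mathbb{Z}_2\to\mathbb{C}$ is pre-exact, which is the claim.

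The computational parts (the first assertion, the normality of $P_vA$, and the identification of $p(v_{ij})$ in $L$) are routine, as is the final appeal to Lemma~\ref{ex1}. I expect the one genuinely delicate step to be the implication $L\cong\mathbb{C}\Rightarrow P_vA=A$, which rests on the faithful flatness of the cosemisimple Hopf algebra $A$ over its Hopf subalgebra $P_vA$ — equivalently, on the (pre-)exactness of $\mathbb{C}\to P_vA\to A\to A\quot P_vA\to\mathbb{C}$ — precisely the circle of results (short five lemma, \cite{ta}, \cite{ade}) on which the paper already depends.
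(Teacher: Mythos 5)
Your proof is correct and follows essentially the same route as the paper: form the quotient $A\quot P_vA$, show the generators map to $\delta_{ij}s$ with $s$ grouplike of order at most $2$, rule out $s=1$ via faithful flatness of the cosemisimple Hopf algebra $A$ over $P_vA$ (which would force $A=P_vA$ to be commutative), and conclude with Lemma~\ref{ex1}. The only difference is that you verify the commutativity of $P_vA$ by a direct two-step use of the half-commutation relations, where the paper simply cites Proposition~3.2 of \cite{bve}.
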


\begin{proof}
 The key observation that $P_vA$ is commutative is Proposition 3.2 in \cite{bve}.
 It is clear that $P_vA$ is a normal Hopf $*$-subalgebra of $A$, and hence we can form the Hopf $*$-algebra quotient $A//P_vA= A/ A(P_vA)^+$, with $p : A \longrightarrow A//P_vA$ the canonical
 surjection. It is not difficult to see that in $A//P_vA$ we have $p(v_{ij})=0$ if $i \not =j$, $p(v_{ii})=p(v_{jj})$ for any $i,j$ and if we put $g=p(v_{ii})$, $g^2=1$. So we have to prove that
 $g \not =1$. If $g=1$, then $A//P_vA$ is trivial and $p= \varepsilon$.
 We know from \cite{chi} that $A$ is faithfully flat as a $P_vA$-module (since orthogonal Hopf algebras are cosemisimple), and hence by \cite{sch}, we have $A^{\rm cop} = P_vA$. So if $g=1$ we have $A^{\rm cop}=P_vA=A$ and $A$ is commutative. Thus if $A$ is noncommutative we have $g\not =1$, the map
 $p$ satisfies the conditions in the statement  and we have the announced exact sequence (Lemma \ref{ex1}).
\end{proof}

\begin{remark}
 {\rm The previous exact sequence is cocentral. Thus it is possible, in principle, to classify
 the finite-dimensional half-commutative orthogonal Hopf algebras according to the scheme used in \cite{bn}. The classification data will involve in particular pairs $(\Gamma, \omega)$ formed
by a finite subgroup $\Gamma \subset PU_n$ and 
 a cocycle $\omega \in H^2(\Gamma, \mathbb Z_2)$, see \cite{bn} for details. 
 }
\end{remark}

\section{The main construction}

In this section we perform our main construction that associates to any self-transpose compact subgroup $G \subset U_n$ 
a half-commutative orthogonal Hopf algebra $\mathcal A_*(G)$ and we show any half-commutative orthogonal Hopf algebra arises in this way.

We begin with a well-known lemma. We give a proof for the sake of completeness.

\begin{lemma}\label{s} 
 Let $G \subset U_n$ be a  compact subgroup, and denote by $u_{ij}$ the coordinate functions
 on $G$. 
 The following assertions are equivalent.
 \begin{enumerate}
 \item $G$ is self-transpose.
\item  There exists a unique involutive Hopf $*$-algebra automorphism 
$s : \mathcal R(G) \longrightarrow \mathcal R(G)$ such that $s(u_{ij}) =u_{ij}^*$. 
\end{enumerate}
Moreover if $G$ is self-transpose the automorphism is non-trivial if and only $G$ is non-real.
\end{lemma}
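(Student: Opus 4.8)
The plan is to prove the equivalence $(1)\Leftrightarrow(2)$ by exploiting the universal/Tannakian description of $\mathcal R(G)$ and the fact that $\mathcal R(G)$ is commutative, so that $*$-algebra maps into $\mathbb C$ correspond to elements of $G$. For the implication $(1)\Rightarrow(2)$, I would first observe that since $G$ is self-transpose, the matrix $\bar u = (u_{ij}^*)$ is again a unitary matrix corepresentation of $\mathcal R(G)$ whose coefficients satisfy the same relations as the $u_{ij}$: indeed, the assignment $g \mapsto g^t$ is a continuous map $G \to G$, and pulling back coordinate functions along it sends $u_{ij}$ to the function $g \mapsto (g^t)_{ij} = g_{ji}$. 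But for $g \in G \subset U_n$ we have $g_{ji} = \overline{(g^{-1})_{ij}} = (S(u_{ij}))^*$ evaluated at... — more directly, one checks that the map $s$ defined on generators by $s(u_{ij}) = u_{ij}^*$ is exactly the pullback along transposition $g \mapsto g^t$ composed with the antipode, or better, note that transposition is an anti-automorphism of $U_n$ restricting to $G$, so $\phi: \mathcal R(G)\to\mathcal R(G)$, $\phi(f)(g) = f(g^t)$, is a $*$-algebra automorphism (it is a homomorphism because $(gh)^t = h^t g^t$ together with the fact that $\mathcal R(G)$ is commutative kills the order reversal) with $\phi(u_{ij}) = u_{ji}$; then $s := \phi \circ S$ — wait, $S$ is an anti-automorphism, and again commutativity makes it an automorphism — gives $s(u_{ij}) = \phi(S(u_{ij})) = \phi(u_{ji}^*) = u_{ij}^*$. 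One verifies $s$ is a Hopf $*$-algebra automorphism (it is a composite of such) and $s^2 = \id$ since $\phi$ and $S$ are involutive and commute on generators; uniqueness is automatic since the $u_{ij}$ generate.

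For $(2)\Rightarrow(1)$, given such an $s$, and using that $\mathcal R(G)$ is commutative, I would identify $G$ with $\Hom_{*-\mathrm{alg}}(\mathcal R(G),\mathbb C)$, where $g \in G$ corresponds to the evaluation character $\mathrm{ev}_g$. Precomposition with $s$ gives a $*$-algebra automorphism $s^*$ of this set, i.e.\ a map $G \to G$; I claim $s^*(g) = g^t$. Indeed $\mathrm{ev}_g \circ s$ is the character sending $u_{ij} \mapsto \mathrm{ev}_g(u_{ij}^*) = \overline{g_{ij}}$; on the other hand, since $g \in U_n$, $\overline{g_{ij}} = (g^{-1})_{ji} = ((g^{-1})^t)_{ij}$, so the corresponding element is $(g^{-1})^t = (g^t)^{-1}$ — hmm, this lands on $(g^t)^{-1}$ rather than $g^t$; but $(g^t)^{-1} \in G$ iff $g^t \in G$ since $G$ is a group, so the conclusion $g^t \in G$ still follows. (Alternatively, compose with the antipode first to land exactly on $g^t$.) Either way, from the existence of the automorphism $s$ one reads off that $g \mapsto (g^t)^{\pm 1}$ maps $G$ into $G$, hence $G$ is self-transpose.

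For the last assertion: $s$ is trivial, i.e.\ $s = \id$, iff $u_{ij} = u_{ij}^*$ for all $i,j$ (since the $u_{ij}$ generate and $s, \id$ are algebra maps), iff every coordinate function on $G$ is real-valued, iff $g_{ij} \in \mathbb R$ for all $g \in G$ and all $i,j$, iff $G \subset O_n$, i.e.\ iff $G$ is \emph{not} non-real. I expect no serious obstacle here; the only point requiring a little care is the faithful-flatness-free argument that precomposition-with-$s$ really is induced by an honest map of the group $G$ (equivalently, that $s$ restricts to the commutative setting cleanly) and the bookkeeping of transpose versus inverse-transpose, which I would streamline by inserting the antipode at the right place so that $s$ corresponds precisely to $g \mapsto g^t$ on characters.
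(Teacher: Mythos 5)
Your proof is correct and is essentially the paper's argument: both rest on the observation that the self-transpose condition makes $g\mapsto (g^t)^{-1}=\overline{g}$ a compact group automorphism of $G$, whose pullback is the desired $s$ (your $\phi\circ S$ is exactly this pullback), with the converse and the last assertion read off from $G\simeq \Hom_{*-\mathrm{alg}}(\mathcal R(G),\mathbb C)$. The only cosmetic slip is the parenthetical ``it is a composite of such'': $\phi$ and $S$ are each coalgebra \emph{anti}-morphisms, so neither is a Hopf automorphism on its own, but their composite is, which is what you need.
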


\begin{proof} Assume that $G$ is self-transpose. Then we have an involutive compact group automorphism
 \begin{align*}
 \sigma : G &\longrightarrow G \\
  g &\longmapsto (g^t)^{-1} = \overline{g}
 \end{align*}
 which induces an involutive Hopf $*$-algebra automorphism $s : \mathcal R(G) \longrightarrow \mathcal R(G)$ such that $s(u_{ij}) =u_{ij}^*$. Uniqueness is obvious since the elements $u_{ij}$ generate $\mathcal R(G)$ as a $*$-algebra. Conversely, the existence of $s$ will ensure the existence of the automorphism $\sigma$ since $G \simeq {\rm Hom}_{*-alg}(\mathcal R(G),\mathbb C)$, and hence $G$ will be self-transpose. The last assertion is immediate.
\end{proof}

\begin{definition}
 Let $G \subset U_n$ be a self-transpose non-real compact subgroup. 
 We denote by $\mathcal R(G) \rtimes \mathbb C\mathbb Z_2$ the crossed product Hopf $*$-algebra associated to the involutive Hopf $*$-algebra automorphism $s$ of Lemma \ref{s}.
\end{definition}

Recall that the Hopf $*$-algebra structure of $\mathcal R(G) \rtimes \mathbb C\mathbb Z_2$
is defined as follows (see e.g. \cite{ks}).

\begin{enumerate}
 \item As a coalgebra, $\mathcal R(G) \rtimes \mathbb C\mathbb Z_2 =\mathcal R(G) \otimes \mathbb C\mathbb Z_2$.
 \item We have $(f \otimes s^i) \cdot (g \otimes s^j) = fs^i(g) \otimes s^{i+j}$, for any $f,g \in \mathcal R(G)$ and $i,j \in \{0,1\}$.
 \item We have $(f \otimes s^i)^*= s^i(f)^* \otimes s^i$   for any $f \in \mathcal R(G)$ and $i \in \{0,1\}$.
 \item The antipode is given by $S(u_{ij} \otimes 1) = u_{ji}^* \otimes 1$, $S(u_{ij} \otimes s) = u_{ji} \otimes s$ (in short $S(f \otimes s^i) = s^i(S(f)) \otimes s^i$ for any $f \in \mathcal R(G)$ and $i \in \{0,1\}$). 
\end{enumerate}

For notational simplicity we denote, for $f \in \mathcal R(G)$,  the respective elements $f \otimes 1$ and $f \otimes s$ of $\mathcal R(G) \rtimes \mathbb C\mathbb Z_2$  by $f$ and $fs$.

\begin{definition}
 Let $G \subset U_n$ be a self-transpose  compact subgroup. 
 We denote by $\mathcal A_*(G)$ the subalgebra of $\mathcal R(G) \rtimes \mathbb C\mathbb Z_2$
 generated by the elements $u_{ij}s$, $i,j \in\{1, \ldots , n\}$.
\end{definition}

\begin{proposition}\label{rep}
  Let $G \subset U_n$ be a self-transpose compact subgroup. Then $\mathcal A_*(G)$ is a Hopf $*$-subalgebra of $\mathcal R(G) \rtimes \mathbb C\mathbb Z_2$, and there exists a surjective Hopf $*$-algebra
  morphism
  \begin{align*}
   \pi : A_o^*(n) &\longrightarrow \mathcal A_*(G) \\
   v_{ij} &\longmapsto u_{ij}s
  \end{align*}
 Hence $\mathcal A_*(G)$ is a half-commutative orthogonal Hopf algebra, and is noncommutative if and only if $G$ is doubly non-real.
\end{proposition}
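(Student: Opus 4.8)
The plan is to verify the three assertions of Proposition \ref{rep} in order: that $\mathcal A_*(G)$ is a Hopf $*$-subalgebra, that $\pi$ exists and is surjective, and the criterion for noncommutativity.

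\textbf{Step 1: $\mathcal A_*(G)$ is a Hopf $*$-subalgebra.} I would first check that the generating set $\{u_{ij}s\}$ is stable under the structural maps, up to the subalgebra they generate. For the comultiplication: using that $s$ is grouplike in $\mathbb C\mathbb Z_2$ and the crossed product coproduct, one computes $\Delta(u_{ij}s) = \sum_k (u_{ik}s) \otimes (u_{kj}s)$, so $\mathcal A_*(G)$ is a subcoalgebra. For the $*$-structure: $(u_{ij}s)^* = s(u_{ij})^* s = u_{ij} s = u_{ij}s$ using rule (3) and $s(u_{ij}) = u_{ij}^*$, so the generators are self-adjoint and $\mathcal A_*(G)$ is a $*$-subalgebra. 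For the antipode: by rule (4), $S(u_{ij}s) = s(S(u_{ij}))s = s(u_{ji}^*) s = u_{ji} s$, again in the subalgebra. The counit clearly restricts. Hence $\mathcal A_*(G)$ is a Hopf $*$-subalgebra.

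\textbf{Step 2: existence and surjectivity of $\pi$.} By the universal property of $A_o^*(n)$, it suffices to check that the elements $v_{ij} := u_{ij}s$ are self-adjoint (done in Step 1), half-commute, and form an orthogonal matrix in $M_n(\mathcal A_*(G))$. For half-commutation: for $a = u_{ij}s$, $b = u_{kl}s$, $c = u_{pq}s$, the product $abc$ in the crossed product equals $u_{ij}\, s(u_{kl})\, s^2(u_{pq}) \otimes s^3 = u_{ij} u_{kl}^* u_{pq} s$ (since $s^2 = \id$, $s^3 = s$), and similarly $cba = u_{pq} u_{kl}^* u_{ij} s$; these agree because $\mathcal R(G)$ is commutative. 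For orthogonality: $(vv^t)_{il} = \sum_j (u_{ij}s)(u_{lj}s) = \sum_j u_{ij} u_{lj}^* s^2 = \sum_j u_{ij} u_{lj}^* = \delta_{il}$ since $u$ is unitary, and similarly $v^t v = I$; since the $v_{ij}$ are self-adjoint this says $v$ is orthogonal. Thus $\pi$ exists, and it is surjective since the $u_{ij}s$ generate $\mathcal A_*(G)$ by definition. The self-adjoint generators $u_{ij}s$ of the orthogonal corepresentation $v$ half-commute, so $\mathcal A_*(G)$ is a half-commutative orthogonal Hopf algebra.

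\textbf{Step 3: noncommutativity criterion.} The algebra $\mathcal A_*(G)$ is generated by the $u_{ij}s$, and a product of two such is $(u_{ij}s)(u_{kl}s) = u_{ij}u_{kl}^*$, a pure (degree-zero) element of $\mathcal R(G)$; in fact the subalgebra $P_vA$ of $\mathcal A_*(G)$ generated by these degree-two products lands in $\mathcal R(G)$ and, by Lemma \ref{pun}, equals $\mathcal R(G/G\cap\mathbb T)$, generated by the $u_{ij}u_{kl}^*$. Two generators $u_{ij}s$ and $u_{pq}s$ commute if and only if $u_{ij}u_{pq}^* = u_{pq}u_{ij}^*$ in $\mathcal R(G)$; evaluated on $g \in G$ this reads $g_{ij}\overline{g_{pq}} = g_{pq}\overline{g_{ij}}$, i.e. $g_{ij}\overline{g_{pq}} \in \mathbb R$. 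So all generators commute pairwise for all $g$ precisely when $G$ is not doubly non-real (any longer product reduces, via half-commutation and the fact that it ends up being an element of $\mathcal R(G)$ times $s^{\pm}$, to these pairwise relations, since $\mathcal R(G)$ itself is commutative). Hence $\mathcal A_*(G)$ is commutative if and only if $G$ is not doubly non-real, which is the claim.

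The main obstacle I anticipate is Step 3: making fully rigorous the reduction showing that commutativity of $\mathcal A_*(G)$ is \emph{equivalent} to (not merely implied by) the pairwise commutation of the generators $u_{ij}s$ — one must argue that an arbitrary commutator in $\mathcal A_*(G)$ vanishes once all these do, using the half-commutation relations from Step 2 together with the commutativity of $\mathcal R(G)$, and keeping careful track of which elements sit in the degree-zero versus degree-one part of the $\mathbb Z_2$-grading of the crossed product. The first two steps are essentially routine crossed-product bookkeeping.
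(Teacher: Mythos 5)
Your proof is correct and follows essentially the same route as the paper: verify the generators are self-adjoint and stable under $\Delta$ and $S$, check half-commutation and orthogonality to invoke the universal property of $A_o^*(n)$, and reduce noncommutativity to the pairwise commutation $u_{ij}u_{kl}^*=u_{kl}u_{ij}^*$, which pointwise is exactly the doubly non-real condition. The obstacle you anticipate in Step 3 is not a real one: an algebra generated by a family of pairwise commuting elements is automatically commutative (every element is a linear combination of monomials in the generators), so no appeal to half-commutation or to the $\mathbb Z_2$-grading is needed.
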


\begin{proof}
 We have $(u_{ij}s)^* = su_{ij}^*= u_{ij}s$ and hence the elements $u_{ij}s$ are self-adjoint and generate a $*$-subalgebra. Moreover, using the coproduct and antipode formula, it is immediate to check that
  $\Delta(u_{ij}s) = \sum_ku_{ik}s \otimes u_{kj}s$ and $S(u_{ij}s)=u_{ji}s$, and hence 
 $\mathcal A_*(G)$ is an orthogonal  Hopf $*$-subalgebra of $\mathcal R(G) \rtimes \mathbb C\mathbb Z_2$.
 We have
 $$u_{ij}su_{kl}su_{pq}s = u_{ij}u_{kl}^*u_{pq}s=u_{pq}u_{kl}^* u_{ij}s=u_{pq}su_{kl}su_{ij}s$$
 Hence the coefficients of the orthogonal matrix $(u_{ij}s)$ half-commute, and we get our
 Hopf $*$-algebra map  $\pi : A_o^*(n) \longrightarrow \mathcal A_*(G)$. The algebra 
 $\mathcal A_*(G)$ is commutative if and only if the elements $u_{ij}s$ pairwise commute. We have
 $u_{ij}su_{kl}s = u_{ij}u_{kl}^*$, so $\mathcal A_*(G)$ is noncommutative if and only if there exist $i,j,k,l$
 with $u_{ij}u_{kl}^*\not =u_{kl}u_{ij}^*$, which precisely means that $G$ is doubly non-real.
\end{proof}

The Hopf $*$-algebra $\mathcal A_*(G)$ is part of a natural pre-exact sequence.


\begin{proposition}\label{exact} Let $G \subset U_n$ be a self-transpose compact subgroup.
 Then there exists a Hopf $*$-algebra embedding 
 $\mathcal R(G/G \cap \mathbb T) \hookrightarrow \mathcal A_*(G)$ and a pre-exact sequence
 $$\mathbb C \to  \mathcal R(G/G \cap \mathbb T) \overset{j}\to \mathcal A_*(G) \overset{q}\to \mathbb C\mathbb Z_2\to\mathbb C$$
\end{proposition}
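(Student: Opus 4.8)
The plan is to realize $q$ as the restriction of the obvious Hopf $*$-algebra projection of the crossed product onto $\mathbb C\mathbb Z_2$, to identify the asserted copy of $\mathcal R(G/G\cap\mathbb T)$ with the ``even part'' of $\mathcal A_*(G)$ for the $\mathbb Z_2$-grading coming from the $\mathbb C\mathbb Z_2$-factor, and then to compute the $q$-coinvariants by a short direct calculation inside $\mathcal R(G)\rtimes\mathbb C\mathbb Z_2$. Note that noncommutativity of $\mathcal A_*(G)$ is not assumed here, so I would not try to reduce to Lemma \ref{exacthalf}; instead everything should be done by hand from the explicit crossed-product structure recalled above.

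First I would check that $\bar q := \varepsilon_{\mathcal R(G)} \otimes \id : \mathcal R(G)\rtimes\mathbb C\mathbb Z_2 \to \mathbb C\mathbb Z_2$ is a surjective Hopf $*$-algebra map: this is routine from the formulas for the product, coproduct, $*$ and antipode on the crossed product, using $\varepsilon\circ s = \varepsilon$. Let $q$ be its restriction to $\mathcal A_*(G)$; then $q(u_{ij}s) = \varepsilon(u_{ij})s = \delta_{ij}s$, and $q$ is surjective because $q(u_{11}s)=s$. Next, the crossed product is $\mathbb Z_2$-graded with degree-$i$ component $\mathcal R(G)\otimes s^i$, and since $\mathcal A_*(G)$ is generated by the homogeneous degree-$1$ elements $u_{ij}s$ it inherits a grading $\mathcal A_*(G)=\mathcal A_*(G)_0\oplus\mathcal A_*(G)_1$. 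The even part $\mathcal A_*(G)_0$ is then the subalgebra generated by the products $(u_{ij}s)(u_{kl}s)=u_{ij}u_{kl}^*\otimes 1$, i.e.\ the image under $f\mapsto f\otimes 1$ of the subalgebra of $\mathcal R(G)$ generated by the $u_{ij}u_{kl}^*$, which is $\mathcal R(G/G\cap\mathbb T)$ by Lemma \ref{pun}. I take $j$ to be this embedding $\mathcal R(G/G\cap\mathbb T)\hookrightarrow\mathcal R(G)\hookrightarrow\mathcal R(G)\rtimes\mathbb C\mathbb Z_2$; it is an injective Hopf $*$-algebra map landing in $\mathcal A_*(G)$ (consistently with Lemma \ref{exacthalf}, which already gives that $P_v\mathcal A_*(G)=\mathcal A_*(G)_0$ is a Hopf $*$-subalgebra).

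It then remains to prove $j(\mathcal R(G/G\cap\mathbb T)) = \mathcal A_*(G)^{\mathrm{co}q}$. Writing a general element of $\mathcal A_*(G)$ as $a=(a_0\otimes 1)+(a_1\otimes s)$ with $a_i\otimes s^i\in\mathcal A_*(G)_i$, and using that the crossed product is a tensor product of coalgebras, so that $\Delta(a_i\otimes s^i)=\sum(a_{i(1)}\otimes s^i)\otimes(a_{i(2)}\otimes s^i)$, one computes $(\id\otimes q)\Delta(a) = (a_0\otimes 1)\otimes 1 + (a_1\otimes s)\otimes s$. Comparing this with $a\otimes 1$ and using that $\{1,s\}$ is a basis of $\mathbb C\mathbb Z_2$ forces $a_1\otimes s=0$, hence $a\in\mathcal A_*(G)_0 = j(\mathcal R(G/G\cap\mathbb T))$; the reverse inclusion follows at once from the same formula. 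Together with injectivity of $j$ and surjectivity of $q$ this yields pre-exactness. I do not expect a serious obstacle: the only input carrying real content is the concrete description of $\mathcal R(G/G\cap\mathbb T)$ from Lemma \ref{pun}, and the rest is bookkeeping with the $\mathbb Z_2$-grading and the crossed-product coalgebra structure. The one point to stay careful about is that the whole argument must (and does) work regardless of whether $\mathcal A_*(G)$ is commutative, so one should resist quoting Lemma \ref{exacthalf} for the exact sequence itself.
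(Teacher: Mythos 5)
Your proof is correct and follows essentially the same route as the paper: $q$ is the restriction of $\varepsilon\otimes\id$ to $\mathcal A_*(G)$, the coinvariants are identified with the subalgebra generated by the $u_{ij}su_{kl}s=u_{ij}u_{kl}^*\otimes 1$, and Lemma \ref{pun} identifies that subalgebra with $\mathcal R(G/G\cap\mathbb T)$ embedded via $f\mapsto f\otimes 1$. The only difference is that you spell out, via the $\mathbb Z_2$-grading and the tensor-product coalgebra structure, the computation of $\mathcal A_*(G)^{\mathrm{co}q}$ that the paper dismisses as ``clear''.
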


\begin{proof}
 The map $q$ is defined as the restriction to $\mathcal A_*(G)$ of the Hopf $*$-algebra map
 $\varepsilon \otimes {\rm id} : \mathcal R(G) \rtimes \mathbb C\mathbb Z_2 \rightarrow \mathbb C\mathbb Z_2$.
 Hence we have $q(u_{ij}s) = \delta_{ij}s$. Let $B$ be the subalgebra of $\mathcal A_*(G)$ generated by the elements $u_{ij}su_{kl}s= u_{ij}u_{kl}^*$. It is clear that $B = \mathcal A_*(G)^{coq}$, and hence we have a pre-exact sequence
 $$\mathbb C \to  B \overset{j}\to \mathcal A_*(G) \overset{q}\to \mathbb C\mathbb Z_2\to\mathbb C$$
Consider now the injective Hopf algebra map $\nu : \mathcal R(G) \hookrightarrow \mathcal R(G) \rtimes \mathbb C\mathbb Z_2$, $f \mapsto f \otimes 1$. Since $\mathcal R(G/G \cap\mathbb T)= \mathcal R(G)^{G \cap \mathbb T}$
is the subalgebra generated by the elements $u_{ij}u_{kl}^*$ (Lemma \ref{pun}), we have $\nu(\mathcal R(G/\mathbb T))= B$,
and we get our pre-exact sequence. 
\end{proof}

We will prove  (Theorem \ref{struc}) that  a noncommutative half-commutative
orthogonal Hopf algebra is isomorphic to $\mathcal A_*(G)$ for some compact group $G \subset U_n$.
Before this we first prove that the morphism in Proposition \ref{rep} is an isomorphism
$A_o^*(n) \simeq \mathcal A_*(U_n)$. This can be seen as a consequence of the forthcoming
Theorem \ref{struc}, but the proof is less technical while it already well enlights the main ideas.

\begin{theorem}
 We have a Hopf $*$-algebra isomorphism  $A_o^*(n) \simeq \mathcal A_*(U_n)$. 
\end{theorem}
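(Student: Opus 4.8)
The plan is to show that the surjection $\pi : A_o^*(n) \to \mathcal A_*(U_n)$ of Proposition \ref{rep} is injective. The standard strategy for injectivity of a Hopf algebra map is to exhibit compatible pre-exact sequences and apply the five lemma for cosemisimple Hopf algebras (cf.\ \cite{bbcc}); since $A_o^*(n)$ is cosemisimple, this applies here. So first I would invoke Lemma \ref{exacthalf} applied to $A = A_o^*(n)$ (which is noncommutative for $n \geq 3$; the case $n \leq 2$ must be handled separately, but there $A_o^*(n) = A_o(n) = \mathcal R(O_n)$ and the statement is a direct check) to get a pre-exact sequence
$$\mathbb C \to P_v A_o^*(n) \overset{i}\to A_o^*(n) \overset{p}\to \mathbb C\mathbb Z_2 \to \mathbb C,$$
and alongside it the pre-exact sequence of Proposition \ref{exact} for $G = U_n$:
$$\mathbb C \to \mathcal R(U_n/\mathbb T) \overset{j}\to \mathcal A_*(U_n) \overset{q}\to \mathbb C\mathbb Z_2 \to \mathbb C.$$

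The key point is then that $\pi$ fits into a commutative ladder between these two sequences: on the right, $p$ and $q$ both send the fundamental generators to $\delta_{ij}s$, so $q \circ \pi = p$; on the left, $\pi$ restricts to a surjection $P_v A_o^*(n) \to P_{vs}\mathcal A_*(U_n) = \mathcal R(PU_n)$ sending $v_{ij}v_{kl}$ to $u_{ik}u_{jl}^*$ (using the computation $v_{ij}s \cdot v_{kl}s = u_{ik}u_{jl}^*$ from the proof of Proposition \ref{rep}, modulo re-indexing). By the five lemma it now suffices to prove that this restricted map $\pi| : P_v A_o^*(n) \to \mathcal R(PU_n)$ is an isomorphism. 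Since both are commutative, $P_v A_o^*(n)$ is of the form $\mathcal R(\Gamma)$ for a compact group $\Gamma$, and surjectivity of $\pi|$ says $PU_n \hookrightarrow \Gamma$; I must produce the reverse arrow, i.e.\ an algebra map $\mathcal R(PU_n) \to P_v A_o^*(n)$ inverse to $\pi|$.

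For this I would use the presentation of $\mathcal R(PU_n) = \mathcal R(U_n/\mathbb T)$ from Lemma \ref{pun}: it is presented by generators $w_{ij,kl}$ subject to the relations
$$\sum_{j=1}^n w_{ik,jj} = \delta_{ik} = \sum_{j=1}^n w_{jj,ik}, \quad w_{ij,kl}^* = w_{ji,lk}, \quad \sum_{k,l=1}^n w_{ij,kl}w_{pq,kl}^* = \delta_{ip}\delta_{jq}.$$
So I need only check that the elements $v_{ik}v_{jl} \in P_v A_o^*(n)$ (the would-be images of $w_{ij,kl}$) satisfy exactly these relations. Orthogonality of $v$ gives $\sum_j v_{ij}v_{kj} = \delta_{ik} = \sum_j v_{ji}v_{jk}$, which yields the first batch; self-adjointness of the $v_{ij}$ together with commutativity of $P_v A_o^*(n)$ gives $(v_{ik}v_{jl})^* = v_{jl}v_{ik} = v_{jl}v_{ik}$, i.e.\ the $*$-relation after matching indices; and the quadratic relation $\sum_{k,l} v_{ik}v_{jl}v_{pk}v_{ql} = \delta_{ip}\delta_{jq}$ follows by first half-commuting to bring the two ``$k$'' factors together, $v_{ik}v_{jl}v_{pk} = v_{pk}v_{jl}v_{ik}$, then summing over $k$ using orthogonality to get $\delta_{ip}v_{jl}v_{ql}$ (here one also uses that $P_v A$ is commutative so the leftover $v_{il}v_{il}$-type juggling is harmless), and finally summing over $l$. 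This produces a well-defined $*$-algebra map $\mathcal R(PU_n) \to P_v A_o^*(n)$, manifestly inverse to $\pi|$ on generators, completing the five-lemma argument.

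The main obstacle I anticipate is the bookkeeping in the last step: verifying that the half-commutation relations plus orthogonality of $v$ are exactly enough to force the quadratic relation defining $\mathcal R(PU_n)$, with the index matching (the dictionary $w_{ij,kl} \leftrightarrow v_{ik}v_{jl}$ versus $u_{ik}u_{jl}^*$) done consistently. One should also be slightly careful that $P_v A_o^*(n)$ really is commutative in order to identify it with $\mathcal R(\Gamma)$ — but this is exactly Proposition 3.2 of \cite{bve}, already cited in the proof of Lemma \ref{exacthalf}. Everything else — cosemisimplicity of $A_o^*(n)$, faithful flatness, the applicability of the five lemma — is imported directly from the cited results.
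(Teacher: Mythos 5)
Your proposal is correct and follows essentially the same route as the paper: the commutative ladder between the pre-exact sequence of Lemma \ref{exacthalf} for $A_o^*(n)$ and that of Proposition \ref{exact} for $U_n$, the short five lemma of \cite{bbcc}, and the inversion of $\pi_|$ via the presentation of $\mathcal R(PU_n)$ in Lemma \ref{pun} (you merely spell out the relation-checking that the paper declares ``clear''). One small correction: your fallback for $n\le 2$ is wrong, since $A_o^*(2)=A_o(2)=\mathcal R(O_2^+)$ is noncommutative (the paper notes $O_2^*=O_2^+$), not $\mathcal R(O_2)$; but no case-split is needed anyway, because the map $p:A_o^*(n)\to\mathbb C\mathbb Z_2$, $v_{ij}\mapsto\delta_{ij}g$, exists for every $n$ by the universal property, so Lemma \ref{ex1} furnishes the top pre-exact row uniformly.
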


\begin{proof} Let $\pi : A_o^*(n) \longrightarrow \mathcal A_*(U_n)$ be the Hopf $*$-algebra map from Proposition \ref{rep}, defined by $\pi(v_{ij})=u_{ij}s$. It induces a commutative diagram of Hopf algebra maps with pre-exact rows
$$\begin{CD}
\mathbb C@>>>P_vA_o^*(n)@>{i}>>A_o^*(n)@>{p}>>\mathbb C\mathbb Z_2 @>>>\mathbb C\\
@.@VV{\pi_|}V@VV{\pi}V@|@.\\
\mathbb C@>>>\mathcal R(PU_n)@>{j}>>\mathcal A_*(U_n)@>{q}>>\mathbb C\mathbb Z_2@>>>\mathbb C\\
\end{CD}$$
where the sequence on the top row is the one of Lemma \ref{exacthalf} and  the sequence on the lower row is the one of Proposition \ref{exact}.
The standard presentation of $\mathcal R(PU_n)$ (Lemma \ref{pun}) ensures the existence of a $*$-algebra map
$\mathcal R(PU_n) \longrightarrow P_vA_o^*(n)$, $u_{ij}u_{kl}^* \mapsto v_{ij}v_{kl}$ which is clearly an inverse isomorphism for $\pi_|$. Thus we can invoke the short five lemma from \cite{bbcc} (Theorem 3.4) to conclude that $\pi$ is an isomorphism.
\end{proof}

Note that a precursor for the previous isomorphism $A_o^*(n) \simeq \mathcal A_*(U_n)$
was the matrix model  $A_o^*(n)\hookrightarrow M_2(\mathcal R(U_n))$ found in \cite{bcs2}, Section 8.


\begin{theorem}\label{struc}
 Let $A$ be a noncommutative half-commutative orthogonal Hopf algebra. Then there exists a self-transpose doubly non-real compact group $G$
 with $\mathbb T \subset G \subset U_n$ such that $A \simeq \mathcal A_*(G)$. 
\end{theorem}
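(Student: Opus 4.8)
The plan is to mimic the proof just given for $A_o^*(n) \simeq \mathcal A_*(U_n)$, but now producing the group $G$ from the abstract data. Starting from a noncommutative half-commutative orthogonal Hopf algebra $A$ with half-commuting self-adjoint generators $v=(v_{ij})$, Lemma \ref{exacthalf} gives us the pre-exact sequence
$$\mathbb C \to P_vA \overset{i}\to A \overset{p}\to \mathbb C\mathbb Z_2 \to \mathbb C,$$
with $p(v_{ij})=\delta_{ij}s$, and $P_vA$ is a commutative Hopf $*$-algebra. Since $A$ is an orthogonal Hopf algebra it is cosemisimple, hence so is $P_vA$; being commutative it is of the form $\mathcal R(H)$ for a compact group $H$, and the formula $\Delta(v_{ij}v_{kl})=\sum_{a,b} v_{ia}v_{kb}\otimes v_{aj}v_{bl}$ together with the relations $\sum_a v_{ia}v_{ka}=\delta_{ik}=\sum_a v_{ai}v_{ak}$ show that the generators $w_{ij,kl}:=v_{ij}v_{kl}$ satisfy exactly the presentation of $\mathcal R(PU_n)$ recorded in Lemma \ref{pun}. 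Thus there is a surjective Hopf $*$-algebra map $\mathcal R(PU_n)\twoheadrightarrow P_vA$, i.e. $H$ is realized as a closed subgroup of $PU_n$.

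Next I would lift this subgroup of $PU_n$ to a subgroup of $U_n$. Form the pullback: let $G\subset U_n$ be the preimage of $H\subset PU_n$ under $U_n\to PU_n$, so that $\mathbb T\subset G\subset U_n$ and $\mathcal R(G/\mathbb T)=\mathcal R(H)=P_vA$. One must check $G$ is self-transpose: since $P_vA$ carries the quotient Hopf structure from $A$ and the relations $w_{ij,kl}^*=w_{ji,lk}$ hold, the transpose corresponds to the $*$-structure, which is intrinsic, so $H$ (and hence $G$, as $\mathbb T$ is self-transpose) is stable under transposition; by Lemma \ref{s} there is the automorphism $s$ and we may form $\mathcal R(G)\rtimes\mathbb C\mathbb Z_2$ and the Hopf $*$-algebra $\mathcal A_*(G)$, fitting in the pre-exact sequence of Proposition \ref{exact}
$$\mathbb C \to \mathcal R(G/\mathbb T) \overset{j}\to \mathcal A_*(G) \overset{q}\to \mathbb C\mathbb Z_2 \to \mathbb C.$$
Now the surjection $\pi: A_o^*(n)\to \mathcal A_*(G)$ of Proposition \ref{rep} combined with the surjection $A_o^*(n)\to A$ ought to be shown to factor as $A_o^*(n)\to\mathcal A_*(G)\overset{\simeq}{\dashrightarrow}A$ or the reverse; concretely I expect to build a Hopf $*$-algebra map $\varphi:\mathcal A_*(G)\to A$ with $\varphi(u_{ij}s)=v_{ij}$, using the universal property of $A_o^*(n)$ and the fact that $A$ is a quotient of it, and check it is compatible with the two pre-exact sequences: on the $P_vA$-part it restricts to the isomorphism $\mathcal R(G/\mathbb T)=P_vA\to P_vA$ coming from our identification, and on the $\mathbb C\mathbb Z_2$-part it is the identity. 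Then the short five lemma for cosemisimple Hopf algebras (\cite{bbcc}, Theorem 3.4) forces $\varphi$ to be an isomorphism, giving $A\simeq\mathcal A_*(G)$. Finally, noncommutativity of $A$ translates, via Proposition \ref{rep}, into $G$ being doubly non-real.

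The main obstacle is the construction of the map $\varphi:\mathcal A_*(G)\to A$ in the correct direction and the verification that it restricts to the right isomorphism on the sub-Hopf algebras. The subtlety is that $\mathcal A_*(G)$ is defined as a subalgebra of a crossed product built from the group $G$ we extracted, so one does not get $\varphi$ for free from a universal property of $\mathcal A_*(G)$; instead one should use that both $A$ and $\mathcal A_*(G)$ are quotients of $A_o^*(n)$ and argue that the kernel of $A_o^*(n)\twoheadrightarrow\mathcal A_*(G)$ is contained in the kernel of $A_o^*(n)\twoheadrightarrow A$ (or vice versa), which in turn reduces to comparing the degree-two parts $P_vA$ versus $\mathcal R(G/\mathbb T)$ — and these were arranged to coincide by our choice of $G$. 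Equivalently, one identifies $\mathcal A_*(G)$ inside $\mathcal R(G)\rtimes\mathbb C\mathbb Z_2$ and checks the half-commutation plus orthogonality relations defining $A_o^*(n)$ suffice to push the presentation of $A$ through; care is needed because one must know that no extra relations hold in $A$ beyond those forced by $P_vA$ and the cocentral $\mathbb Z_2$-extension, and this is exactly where faithful flatness of $A$ over $P_vA$ (from cosemisimplicity, via \cite{chi}) and the short five lemma do the essential work.
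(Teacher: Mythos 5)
Your route is genuinely different from the paper's: rather than presenting $A=A_o^*(n)/I_{\mathcal C}$ by intertwiner relations via Tannaka--Krein duality and then cutting $G$ out of $U_n$ by explicit equations, you recover $H\subset PU_n$ directly from the commutative Hopf $*$-algebra $P_vA$ (correct, up to an index slip: the match with Lemma \ref{pun} is $w_{ij,kl}\mapsto v_{ik}v_{jl}$, not $v_{ij}v_{kl}$) and let $G$ be its preimage in $U_n$. The two constructions yield the same group and your plan is viable, but two steps are asserted rather than proved, and the justification offered for the first is wrong. (1) Self-transposeness of $H$: the transposition of $PU_n$ acts on $\mathcal R(PU_n)$ as the \emph{linear} automorphism determined by $w_{ij,kl}\mapsto w_{ji,lk}=w_{ij,kl}^*$, and you need $\ker(\mathcal R(PU_n)\to P_vA)$ to be stable under this linear map. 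Being a $*$-ideal only gives stability under the \emph{conjugate-linear} involution $*$, which is strictly weaker (e.g.\ the ideal $(x-iy)$ of $\mathbb C[x,y]$ with $x^*=y$ is $*$-stable but not stable under the linear swap $x\leftrightarrow y$). So ``the transpose corresponds to the $*$-structure, which is intrinsic'' does not prove the claim; indeed a general closed subgroup of $PU_n$ need not be self-transpose, so half-commutativity must enter somewhere. A correct argument exists: by half-commutation the linear map $x\mapsto\sum_j v_{1j}xv_{1j}$ sends $v_{ab}v_{cd}$ to $v_{cd}v_{ab}$, and one checks it defines a Hopf $*$-algebra automorphism of $P_vA$ covering the transposition automorphism of $\mathcal R(PU_n)$ --- but something of this sort has to be supplied.

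(2) The comparison map: you rightly flag this as the main obstacle, but the reduction ``to comparing the degree-two parts'' is left unjustified. What is needed is that $K:=\ker(A_o^*(n)\to A)$ is generated as an ideal by $K\cap P_vA_o^*(n)$; only then does the equality of the even parts (arranged by your choice of $G$) give an inclusion of kernels, hence a map between $A$ and $\mathcal A_*(G)$ to which the short five lemma can be applied. This is true --- $K$ is graded for the $\mathbb Z_2$-grading induced by $p$, and any odd $x\in K$ satisfies $x=\sum_j (xv_{1j})v_{1j}$ with $xv_{1j}\in K\cap P_vA_o^*(n)$ by orthogonality of $v$ --- but it is precisely the point that the paper's Tannaka--Krein step is designed to handle, by producing generators of $K$ that lie in even degree from the outset. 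With these two points filled in, your argument (including the final appeal to Proposition \ref{rep} for double non-realness and the observation $\mathbb T\subset G$) goes through, and it arguably gives a cleaner proof since it avoids Tannaka--Krein duality entirely.
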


\begin{proof} Let $A$ be a noncommutative  half-commutative orthogonal Hopf algebra.  
 The proof is divided into two steps.
 
 Step 1. In this preliminary step, we first write a convenient presentation for $A$. By Lemma \ref{exacthalf}
there exist surjective Hopf $*$-algebra maps 
$$A_o^*(n)  \overset{f} \rightarrow  A \overset{p}\rightarrow \mathbb C\mathbb Z_2$$ 
with $pf(v_{ij})=\delta_{ij}s$. We denote by $V$ the comodule over $A_o^*(n)$ corresponding to the matrix $v=(v_{ij})\in M_n(A_o^*(n))$, with its standard basis $e_1, \ldots, e_n$. To any linear map 
$\underline{\lambda} : \mathbb C \rightarrow V^{\otimes m}$, 
$$\underline{\lambda}(1) = \sum_{i_1, \ldots , i_m}\lambda(i_1, \ldots, i_m)e_{i_1} \otimes \cdots \otimes e_{i_m}$$
 we associate families $X(\underline{\lambda})$ and $X'(\underline{\lambda})$ of elements of $A_o^*(n)$ defined
by 
$$X(\underline{\lambda})= \{\sum_{j_1, \ldots ,j_m} v_{i_1j_1}\cdots  v_{i_mj_m}\lambda(j_1, \ldots, j_m) -\lambda(i_1, \ldots , i_m)1, \ i_1, \ldots ,i_m \in \{1, \ldots , n\}\}$$
$$X'(\underline{\lambda})= \{\sum_{j_1, \ldots ,j_m} v_{j_mi_m}\cdots  v_{j_1i_1}\lambda(j_1, \ldots, j_m) -\lambda(i_1, \ldots , i_m)1, \ i_1, \ldots ,i_m \in \{1, \ldots , n\}\}$$
These elements generate a $*$-ideal in $A_o^*(n)$, which is in fact a Hopf $*$-ideal, that we denote by $I_{\underline{\lambda}}$.
We also view $V$ as an $A$-comodule via $f$, and 
the map $\underline{\lambda}$ is a morphism of $A$-comodules if and only if $f(I_{\underline \lambda})=0$.
Now given a family $\mathcal C$ of linear maps $\mathbb C \rightarrow V^{\otimes m}$, $m \in \mathbb N$, we denote by $I_{\mathcal C}$ the Hopf $*$-ideal of $A_o^*(n)$ generated by all the elements of $X(\underline{\lambda})$ and $X'(\underline{\lambda})$, $\underline{\lambda} \in \mathcal C$. It follows from Woronowicz Tannaka-Krein duality \cite{wo2} that $f$ induces an isomorphism $A_o^*(n)/I_{\mathcal C} \simeq A$ for a suitable set 
$\mathcal C$ of morphisms of $A$-comodules (typically $\mathcal C$ is a family of morphisms that generate
the tensor category of corepresentations of $A$).

Step 2. We now construct a compact group $G$ with $\mathbb T \subset G \subset U_n$. We start with a presentation $A_o^*(n)/I_{\mathcal C} \simeq A$ as in Step 1. Note that the existence of the map $p : A \longrightarrow\mathbb C\mathbb Z_2$ implies that  for 
$\underline{\lambda} : \mathbb C \longrightarrow V^{\otimes m}$, if $\underline{\lambda} \not=0$ and $\underline{\lambda} \in \mathcal C$, then $m$ is even (evaluate $p$ on the elements of $X(\underline{\lambda})$).
We associate to $\underline{\lambda} : \mathbb C \longrightarrow V^{\otimes 2m} \in \mathcal C$
the following families of elements in $\mathcal R(U_n)$
\begin{align*}X_1(\underline{\lambda})= \{\sum_{j_1, \ldots ,j_{2m}} u_{i_1j_1}u_{i_2j_2}^*\cdots u_{i_{2m-1}j_{2m-1}} u_{i_{2m}j_{2m}}^*&\lambda(j_1, \ldots, j_{2m}) -\lambda(i_1, \ldots , i_{2m})1, 
\\ & i_1, \ldots ,i_{2m} \in \{1, \ldots , n\} \}
\end{align*}
\begin{align*}X_1'(\underline{\lambda})= \{\sum_{j_1, \ldots ,j_{2m}} u_{j_1i_1}^*u_{j_2i_2}\cdots u_{j_{2m-1}i_{2m-1}}^* u_{j_{2m}i_{2m}}&\lambda(j_1, \ldots, j_{2m}) -\lambda(i_1, \ldots , i_{2m})1, 
\\ & i_1, \ldots ,i_{2m} \in \{1, \ldots , n\} \}
\end{align*}
\begin{align*}X_2(\underline{\lambda})= \{\sum_{j_1, \ldots ,j_{2m}} u_{i_1j_1}^*u_{i_2j_2}\cdots u_{i_{2m-1}j_{2m-1}}^* u_{i_{2m}j_{2m}}&\lambda(j_1, \ldots, j_{2m}) -\lambda(i_1, \ldots , i_{2m})1, 
\\ & i_1, \ldots ,i_{2m} \in \{1, \ldots , n\} \}
\end{align*}
\begin{align*}X_2'(\underline{\lambda})= \{\sum_{j_1, \ldots ,j_{2m}} u_{j_1i_1}u_{j_2i_2}^*\cdots u_{j_{2m-1}i_{2m-1}} u_{j_{2m}i_{2m}}^*&\lambda(j_1, \ldots, j_{2m}) -\lambda(i_1, \ldots , i_{2m})1, 
\\ & i_1, \ldots ,i_{2m} \in \{1, \ldots , n\} \}
\end{align*}
Now denote by $J_{\mathcal C}$ the $*$-ideal of $\mathcal R(U_n)$ generated by the elements of  $X_1(\underline{\lambda})$, $X_1'(\underline{\lambda})$, $X_2(\underline{\lambda})$ and  $X'_2(\underline{\lambda})$ for all the elements $\underline{\lambda} \in \mathcal C$.
In fact $J_\mathcal C$ is a Hopf $*$-ideal and we define $G$ to be the compact group $G \subset U_n$ such that $\mathcal R(G) \simeq \mathcal R(U_n)/J_{\mathcal C}$. The existence of a Hopf $*$-algebra map 
$\rho : \mathcal R(G) \longrightarrow \mathbb C\mathbb Z$, $u_{ij} \longmapsto \delta_{ij}t$, where $t$ denotes a generator of $\mathbb Z$, is straightforward, and thus $\mathbb T \subset G$. Also it is easy to chek the existence of a Hopf $*$-algebra map $\mathcal R(G) \longrightarrow \mathcal R(G)$, $u_{ij} \longmapsto u_{ij}^*$, and this show that $G$ is self-transpose. 
We have, by Proposition \ref{rep}, a Hopf $*$-algebra map $\pi : A_o^*(n) \longrightarrow \mathcal A_*(G)$, $v_{ij} \longmapsto u_{ij}s$. It is a direct verification to check that $\pi$ vanishes on $I_\mathcal C$, so induces a Hopf $*$-algebra map $\overline{\pi} : A \longrightarrow \mathcal A_*(G)$.
We still denote by $v_{ij}$ the element $f(v_{ij})$ in $A$.
We get  a commutative diagram with pre-exact rows
$$\begin{CD}
\mathbb C@>>>P_vA@>{i}>>A@>{p}>>\mathbb C\mathbb Z_2 @>>>\mathbb C\\
@.@VV{\overline{\pi}_|}V@VV{\overline{\pi}}V@|@.\\
\mathbb C@>>>\mathcal R(G/\mathbb T)@>{j}>>\mathcal A_*(G)@>{q}>>\mathbb C\mathbb Z_2@>>>\mathbb C\\
\end{CD}$$
where the sequence on the top row is the one of Lemma \ref{exacthalf} and  the sequence on the lower row is the one of Proposition \ref{exact}. To prove that $\overline{\pi}$ is an isomorphism, we just have,
by the short five-lemma for cosemisimple Hopf algebra \cite{bbcc},   
to prove that $\overline{\pi}_| : P_vA \longrightarrow
\mathcal R(G/\mathbb T)$ is an isomorphism. 
Let $J'_{\mathcal C}$ be the $*$-ideal of $\mathcal R(PU_n)$ generated by the elements of  
$X_1(\underline{\lambda})$, $X_1'(\underline{\lambda})$, $X_2(\underline{\lambda})$ and  $X_2(\underline{\lambda})$ for all the elements $\underline{\lambda} \in \mathcal C$.
It is clear, using the $\mathbb Z$-grading on $\mathcal R(G)$ induced by the inclusion $\mathbb T \subset G$ and the fact that
$J_\mathcal C$ is generated by elements of degree zero, that $J'_{\mathcal C} = J_{\mathcal C} \cap \mathcal R(PU_n)$, so 
$\mathcal R(G/\mathbb T) \simeq \mathcal R(PU_n)/J'_{\mathcal C}$. But then
the natural $*$-algebra map $\mathcal R(PU_n) \longrightarrow P_vA$ (Lemma \ref{pun}) vanishes on $J'_{\mathcal C}$,
and hence induces a $*$-algebra map $\mathcal R(G/\mathbb T) \longrightarrow P_vA$, which is an inverse for
$\overline{\pi}_|$. Hence $\overline{\pi}$ is an isomorphism, and the algebra $A$ being noncommutative, it follows from Proposition \ref{rep} that $G$ is doubly non-real.
\end{proof}

Note that the proof of Theorem \ref{struc} also provides a method to find the compact group $G$ from the half-commutative orthogonal Hopf algebra $A$.

\begin{example}{\rm 
On can check, by following the proof of  Theorem \ref{struc}, that the hyperoctaedral Hopf algebra
$A_h^*(n)$ is isomorphic to $\mathcal A_*(K_n)$, where $K_n$ is the subgroup of $U_n$ formed by matrices having exactly one non-zero element on each column and line (with $K_n \simeq \mathbb T^n \rtimes S_n$).}
\end{example}

\begin{remark}{\rm 
 Let $H \subset G \subset U_n$ be self-transpose compact subgroups. The inclusion $H \subset G$ induces a surjective Hopf $*$-algebra map $\mathcal A_*(G) \rightarrow \mathcal A_*(H)$, compatible with the exact sequence in Proposition \ref{exact}. 
 Thus if the inclusion $H \subset G$ induces an isomorphism $H/H \cap \mathbb T \simeq G/G \cap \mathbb T$, the short five lemma ensures that $\mathcal A_*(G) \simeq \mathcal A_*(H)$.  
In particular $\mathcal A_*(U_n) \simeq \mathcal A_*(SU_n)$. }
\end{remark}

We now propose a tentative orthogonal half-liberation for the unitary group. 
In fact another possible half-liberation of $U_n$ has already been proposed in \cite{bhdada},
using the symbol $A_u^*(n)$.
We shall use the notation
$A_u^{**}(n)$ for the object we construct, which is different from the one in  \cite{bhdada}.

\begin{example}{\rm 
 Let $A_u^{**}(n)$ be the quotient of $A_u(n)$ by the ideal generated by the elements
 $$abc-cba, \ a,b,c, \in \{u_{ij}, u_{ij}^*\}$$
 Then  $A_u^{**}(n)$ is isomorphic with
 $\mathcal A_*(U_{2,n})$,  where $U_{2,n}$ is the subgroup of $U_{2n}$ consisting of unitary matrices of the form
 $$\begin{pmatrix}
    A & B \\
    -B & A
   \end{pmatrix}, \ A,B \in M_n(\mathbb C)$$
   and hence is  a half-commutative orthogonal Hopf algebra,
 }
\end{example}

\begin{proof}
Let $\omega \in \mathbb C$ be a primitive 4th root of unity.  We start with the probably well-known surjective Hopf $*$-algebra map
\begin{align*}
 A_o(2n) &\longrightarrow A_u(n) \\
 x_{i,j}, x_{n+i,n+j} &\longmapsto \frac{u_{ij}+u_{ij}^*}{2}, \ i,j \in \{1, \ldots , n\} \\
x_{n+i,j}  &\longmapsto \frac{u_{ij}-u_{ij}^*}{2\omega}, \ i,j \in \{1, \ldots , n\} \\
x_{i,n+j}  &\longmapsto \frac{u_{ij}^*-u_{ij}}{2\omega}, \ i,j \in \{1, \ldots , n\}
\end{align*}
where $x_{i,j}$ denote the standard generators of $A_o(2n)$. 
It is clear that it induces a surjective Hopf $*$-algebra map 
$A_o^*(2n) \longrightarrow A_u^{**}(n)$, and hence $A_u^{**}(n)$ is a half-commutative orthogonal Hopf algebra.

Let $J$ be the ideal of $A_o^*(2n)$ generated by the elements
$$v_{i,j}-v_{n+i,n+j}, \ v_{n+i,j}+v_{i,n+j}, \ i,j \in \{1, \ldots , n\}$$
(where $v_{i,j}$ denotes the class of $x_{ij}$ in $A_o^*(n)$).
Then $J$ is a Hopf $*$-ideal in $A_o^*(2n)$ and the previous Hopf $*$-algebra map induces an isomorphism
$A_o^*(2n)/J \simeq A_u^{**}(n)$ (the inverse sends $u_{ij}$ to $x_{ij} + \omega x_{n+i,j}$). 
Now having the presentation $A_o^*(2n)/J \simeq A_u^{**}(n)$, the proof of Theorem \ref{struc} 
yields $ A_u^{**}(n) \simeq \mathcal A_*(U_{2,n})$.
\end{proof}

\section{Representation theory}

In this section we describe the fusion rules of $\mathcal A_*(G)$ for any compact group $G$
(as usual by fusion rules we mean the set of isomorphism classes of simple comodules together with the decomposition of tensor products of simple comodules into simple constituents).
Thanks to Theorem \ref{struc}, this gives a description of the fusion rules of any half-commutative orthogonal Hopf algebra.

If $A$ is a cosemisimple Hopf algebra, we denote by ${\rm Irr}(A)$ the set of simple (irreducible) comodules over $A$. If $A= \mathcal R(G)$ for some compact group, then ${\rm Irr}(\mathcal R(G))={\rm Irr}(G)$, the set of isomorphism classes of irreducible representations of $G$.
By a slight abuse of notation, for a simple $A$-comodule $V$, we write $V \in {\rm Irr}(A)$.

Let $G \subset U_n$ be a self-transpose compact subgroup. Recall that the transposition 
induces an involutive compact group automorphism
 \begin{align*}
 \sigma : G &\longrightarrow G \\
  g &\longmapsto (g^t)^{-1} = \overline{g}
 \end{align*}
For $V \in {\rm Irr}(G)$, we denote by $V^\sigma$ the (irreducible) representation of $G$ induced 
by the composition with $\sigma$. If $U$ is the fundamental $n$-dimensional representation of $G$, then
$U^\sigma \simeq \overline{U}$.

We begin by recalling the description of the fusion rules for the crossed product $\mathcal R(G) \rtimes \mathbb C\mathbb Z_2$. This is certainly well-known (see e.g. \cite{wa2}, Theorem 3.7)). 

\begin{proposition}\label{fuscrossed}
 Let $G \subset U_n$ be a self-transpose compact subgroup. Then there is a bijection
 $${\rm Irr}(\mathcal R(G) \rtimes \mathbb C\mathbb Z_2) \simeq {\rm Irr}(G) \amalg{\rm Irr}(G)$$
 More precisely, if $X \in {\rm Irr}(\mathcal R(G) \rtimes \mathbb C\mathbb Z_2)$, then there exists 
 a unique $V \in {\rm Irr}(G)$ with either $X \simeq V$ or $X \simeq V \otimes s$. For $V,W \in {\rm Irr}(G)$, we have
 $$V \otimes (W \otimes s) \simeq (V \otimes W) \otimes s, \
  (V \otimes s) \otimes W\simeq (V \otimes W^\sigma) \otimes s , \
  (V \otimes s) \otimes (W\otimes s) \simeq V \otimes W^\sigma $$
\end{proposition}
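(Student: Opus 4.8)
The statement describes the representation theory of the crossed product $\mathcal R(G) \rtimes \mathbb C\mathbb Z_2$ associated to the order-two automorphism $\sigma$ (equivalently $s$). My plan is to treat this as an instance of the general theory of crossed products $\mathcal R(G) \rtimes \mathbb C\Gamma$ (here $\Gamma = \mathbb Z_2$), where the comodules over the crossed product correspond to $\Gamma$-graded $G$-representations (or equivalently to equivariant objects), and then specialize. Concretely, a comodule over $\mathcal R(G) \rtimes \mathbb C\mathbb Z_2 = \mathcal R(G) \otimes \mathbb C\mathbb Z_2$ decomposes, using the grading coming from the group-like elements $1$ and $s$, into a degree-$0$ part and a degree-$1$ part, each of which is a $G$-comodule; the compatibility with the crossed product multiplication ($(f \otimes s)(g \otimes 1) = f s(g) \otimes s$) forces the $G$-action on the degree-$1$ part to be twisted by $\sigma$. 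First I would make this precise: show that every simple comodule $X$ is either concentrated in degree $0$, in which case it is simply a simple $G$-comodule $V$, or concentrated in degree $1$, in which case it is of the form $V \otimes s$ for a simple $G$-comodule $V$ (here $V \otimes s$ denotes $V$ placed in degree $1$, with coaction $v \mapsto v_{(0)} \otimes v_{(1)}s$). This gives the bijection ${\rm Irr}(\mathcal R(G) \rtimes \mathbb C\mathbb Z_2) \simeq {\rm Irr}(G) \amalg {\rm Irr}(G)$ and the uniqueness of the pair $(V,\deg)$.

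Next I would compute the three tensor product rules. The tensor product of comodules uses the coalgebra structure, which on $\mathcal R(G) \rtimes \mathbb C\mathbb Z_2$ is just the tensor product coalgebra $\mathcal R(G) \otimes \mathbb C\mathbb Z_2$; so on underlying vector spaces the tensor product is the usual one, and the $\mathbb Z_2$-grading is additive. This immediately gives the shape of each formula ($V \otimes (W \otimes s)$ lands in degree $1$, etc.). The content is identifying the $G$-comodule structure on the result. For $V \otimes (W \otimes s)$ the coaction on the $\mathbb C\mathbb Z_2$ legs multiplies $1 \cdot s = s$, and the $\mathcal R(G)$-legs give the ordinary coaction, so we get $(V \otimes W) \otimes s$. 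For $(V \otimes s) \otimes W$ the subtlety is that in forming the tensor product coaction one must move $s$ past the $\mathcal R(G)$-coaction coming from $W$, and since in the crossed product $s \cdot f = s(f) \cdot s$, the $G$-coaction on the $W$-factor gets composed with $\sigma$ — hence $W^\sigma$ — yielding $(V \otimes W^\sigma) \otimes s$. Similarly $(V \otimes s) \otimes (W \otimes s)$ gives $V \otimes W^\sigma$ in degree $0$ ($s \cdot s = 1$). Each of these is a short but slightly careful bookkeeping computation with the crossed-product coalgebra and the definition of tensor product of comodules.

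The last point is that these formulas are stated for $V, W$ simple, so I should note that $V \otimes W$ (resp. $V \otimes W^\sigma$) need not be simple, but the formulas are to be read as decompositions: since $- \otimes s$ and $- \mapsto -^{\sigma}$ are tensor autoequivalences (equivalently, bijections on ${\rm Irr}(G)$ and exact), decomposing $V \otimes W$ into irreducibles over $G$ and applying the relevant operation gives the decomposition over $\mathcal R(G) \rtimes \mathbb C\mathbb Z_2$. I would phrase the proof at the level of the category of comodules so that this is automatic. The main obstacle, such as it is, is purely one of care: correctly tracking how the automorphism $\sigma$ enters when $s$ is commuted past an $\mathcal R(G)$-coaction in the tensor product, and making sure the side on which $\sigma$ appears (on $W$, not $V$) is the correct one dictated by the noncommutative multiplication $(f \otimes s^i)(g \otimes s^j) = f s^i(g) \otimes s^{i+j}$. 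There is no deep difficulty; one could alternatively cite \cite{wa2}, Theorem 3.7, but I would prefer to give the direct argument since everything needed is elementary once the crossed-product structure recalled in the excerpt is used.
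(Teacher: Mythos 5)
Your proposal is correct, and for the classification of simple comodules it follows the paper exactly: since $\mathcal R(G) \rtimes \mathbb C\mathbb Z_2$ equals $\mathcal R(G) \otimes \mathbb C\mathbb Z_2$ as a coalgebra, every simple comodule lives on one of the two coalgebra summands and is of the form $V$ or $V \otimes s$ with the untwisted coaction $v \mapsto v_{(0)} \otimes v_{(1)}s$. One small imprecision there: your remark that ``compatibility with the crossed product multiplication forces the $G$-action on the degree-$1$ part to be twisted by $\sigma$'' is a red herring, since comodules only see the coalgebra structure (which is the plain tensor product); your final description of the degree-$1$ simples is the correct untwisted one, and the twist genuinely enters only when tensoring. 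For the fusion rules the two arguments diverge in presentation: the paper invokes character theory (in a cosemisimple Hopf algebra characters determine comodules, and $\chi_{V\otimes s}\,\chi_{W\otimes s} = \chi_V\, s(\chi_W) = \chi_V\,\chi_{W^\sigma}$, etc.), whereas you compute the tensor-product coaction directly from the multiplication rule $(f\otimes s^i)(g\otimes s^j)= f s^i(g)\otimes s^{i+j}$, obtaining the identity on underlying vector spaces as an explicit comodule isomorphism such as $(V\otimes s)\otimes W \simeq (V\otimes W^\sigma)\otimes s$. These have the same content --- the character identity is just the trace of your coaction computation --- but your version is self-contained and exhibits the isomorphisms explicitly, at the cost of a little bookkeeping; the paper's is shorter but leans on cited references. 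Your closing point, that the displayed formulas are to be read as decompositions obtained by splitting $V\otimes W$ (resp.\ $V\otimes W^\sigma$) into irreducibles over $G$, correctly completes the argument.
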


\begin{proof}
 The description of the simple comodules follows in a straightforward manner from the fact that $\mathcal R(G) \rtimes \mathbb C\mathbb Z_2 = \mathcal R(G) \otimes \mathbb C\mathbb Z_2$ as coalgebras.
 The tensor product decompositions are obtained by using character theory, see \cite{wo1} or \cite{ks}.
\end{proof}

\begin{remark}{\rm 
 If $G \subset U_n$ is connected and has a maximal torus $T$ of $G$ contained in $\mathbb T^n$,
 it follows from highest weight theory that $V^\sigma \simeq \overline{V}$ for any $V \in {\rm Irr}(G)$.
 We do not know if this is still true without these assumptions. }
\end{remark}

To express the fusion rules of $\mathcal A_*(G)$, we need more notation.
 Let $G \subset U_n$ be a compact subgroup, and denote by $U$ the fundamental $n$-dimensional representation of $G$. For $m \in \mathbb Z$, we put
 $${\rm Irr}(G)_{[m]} = \{V \in {\Irr}(G), \ V \subset U^{\otimes m} \otimes (U \otimes \overline{U})^{\otimes l}, \ {\rm for} \ {\rm some} \ l \in \mathbb N\}$$  
where $U^{ \otimes 0} = \mathbb C$ and for $m<0$ $U^{\otimes m} = \overline{U}^{\otimes -m}$.  

Now if $V \in {\rm Irr}(G)_{[0]}$, then $V \in {\rm Irr}(G/G \cap \mathbb T)$ (see Lemma \ref{pun}), and since $\mathcal R(G/G \cap \mathbb T) \subset \mathcal A_*(G)$, we get an element
in ${\rm Irr}(\mathcal A_*(G))$, still denoted $V$.

If $V \in {\rm Irr}(G)_{[1]}$, then $V \subset U \otimes (U \otimes \overline{U})^{\otimes l}$, for  some  $l \in \mathbb N$, and hence the coefficients of $V \otimes s$ belong to $\mathcal A_*(G)$.
Thus we get an element of  ${\rm Irr}(\mathcal A_*(G))$, denoted $Vs$. 

\begin{corollary}
 Let $G \subset U_n$ be a self-transpose compact subgroup. Then the map
 $$ {\rm Irr}(G)_{[0]} \amalg{\rm Irr}(G)_{[1]}  \longrightarrow{\rm Irr}(\mathcal A_*(G))$$
$$
  V \longmapsto \begin{cases}
                   V & \text{if}  \ V \in {\rm Irr}(G)_{[0]} \\
		   Vs  &\text{if} \ V \in {\rm Irr}(G)_{[1]}
                  \end{cases} $$
 is a bijection. Moreover, for $V \in {\rm Irr}(G)_{[0]}$, $W,W' \in {\rm Irr}(G)_{[1]}$, we have
 $$V  \otimes Ws \simeq (V \otimes W)s, \
 Ws \otimes V \simeq  (W \otimes V^\sigma)s, \
 Ws \otimes W's \simeq W \otimes W'^{\sigma}, \
 \overline{Ws} \simeq \overline{W}^\sigma s$$
\end{corollary}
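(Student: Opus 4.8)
The plan is to deduce the corollary directly from Proposition \ref{fuscrossed} through the inclusion $\mathcal{A}_*(G) \subseteq H$, where $H := \mathcal{R}(G) \rtimes \mathbb{C}\mathbb{Z}_2$. Both algebras are cosemisimple ($\mathcal{A}_*(G)$ is an orthogonal Hopf algebra by Proposition \ref{rep}, and $H = \mathcal{R}(G) \otimes \mathbb{C}\mathbb{Z}_2$ as a coalgebra), and any $\mathcal{A}_*(G)$-comodule becomes, via $\mathcal{A}_*(G) \hookrightarrow H$, an $H$-comodule with the same coefficient coalgebra and the same lattice of subcomodules. Hence a simple $\mathcal{A}_*(G)$-comodule is the same thing as a simple $H$-comodule $X$ whose coefficient coalgebra $C(X)$ is contained in $\mathcal{A}_*(G)$, two such being isomorphic over $\mathcal{A}_*(G)$ iff isomorphic over $H$. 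By Proposition \ref{fuscrossed} it thus suffices to decide, for $V \in \mathrm{Irr}(G)$, when the coefficient space $C(V) \otimes 1$ of $V$ and the coefficient space $C(V) \otimes s$ of $V \otimes s$ lie in $\mathcal{A}_*(G)$.

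To settle this I would describe $\mathcal{A}_*(G)$ explicitly inside $H = \mathcal{R}(G) \otimes \mathbb{C}\mathbb{Z}_2$. Using $(f \otimes s^i)(g \otimes s^j) = f s^i(g) \otimes s^{i+j}$ and $s(u_{ij}) = u_{ij}^*$, a length-$r$ word in the generators equals $u_{i_1 j_1} u_{i_2 j_2}^* u_{i_3 j_3} u_{i_4 j_4}^* \cdots \otimes s^r$, whose $\mathcal{R}(G)$-part is a coefficient of $U \otimes \overline{U} \otimes U \otimes \cdots$ ($r$ tensor factors alternating). As the coefficient space of a $G$-comodule depends only on its isomorphism class, this has the same coefficient space as $(U \otimes \overline{U})^{\otimes l}$ if $r = 2l$ and as $U \otimes (U \otimes \overline{U})^{\otimes l}$ if $r = 2l+1$. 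Therefore $\mathcal{A}_*(G) = (C_0 \otimes 1) \oplus (C_1 \otimes s)$, with $C_0$ the span of the coefficients of all the comodules $(U \otimes \overline{U})^{\otimes l}$, $l \geq 0$ (so that $C_0 \otimes 1 = \mathcal{R}(G/G \cap \mathbb{T})$ by Lemma \ref{pun}, in agreement with Proposition \ref{exact}), and $C_1$ the span of the coefficients of all the comodules $U \otimes (U \otimes \overline{U})^{\otimes l}$, $l \geq 0$. Since $U \otimes \overline{U}$ contains the trivial comodule, both families of spaces are increasing in $l$, so by cosemisimplicity a simple $G$-comodule $V$ satisfies $C(V) \subseteq C_0$ iff $V$ is a subcomodule of some $(U \otimes \overline{U})^{\otimes l}$, i.e. $V \in \mathrm{Irr}(G)_{[0]}$, and $C(V) \subseteq C_1$ iff $V \in \mathrm{Irr}(G)_{[1]}$. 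Combined with the first paragraph and Proposition \ref{fuscrossed}, this shows that the simple $\mathcal{A}_*(G)$-comodules are exactly the $V$ for $V \in \mathrm{Irr}(G)_{[0]}$ and the $Vs$ for $V \in \mathrm{Irr}(G)_{[1]}$, pairwise non-isomorphic, which is the asserted bijection.

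For the fusion rules I would use that tensor products of $\mathcal{A}_*(G)$-comodules are computed inside $H$, the inclusion being a Hopf algebra morphism. The first three identities are then the corresponding identities of Proposition \ref{fuscrossed}, once one checks that their right-hand sides decompose into the claimed irreducibles: for $V \in \mathrm{Irr}(G)_{[0]}$ and $W, W' \in \mathrm{Irr}(G)_{[1]}$ one has $V^\sigma \in \mathrm{Irr}(G)_{[0]}$ (as $U^\sigma \simeq \overline{U}$), $V \otimes W$ decomposes into elements of $\mathrm{Irr}(G)_{[1]}$ (because $(U \otimes \overline{U})^{\otimes a} \otimes U \otimes (U \otimes \overline{U})^{\otimes b} \simeq U \otimes (U \otimes \overline{U})^{\otimes (a+b)}$), and $W \otimes W'^\sigma$ decomposes into elements of $\mathrm{Irr}(G)_{[0]}$ by the analogous manipulation. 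For the last identity I would compute directly in $H$: $\overline{Ws} = \overline{W \otimes \mathbb{C}s} \simeq \overline{\mathbb{C}s} \otimes \overline{W} \simeq \mathbb{C}s \otimes \overline{W}$, and then apply the rule $(V \otimes s) \otimes W \simeq (V \otimes W^\sigma) \otimes s$ of Proposition \ref{fuscrossed} with $V = \mathbb{C}$ and $W$ replaced by $\overline{W}$, obtaining $\overline{Ws} \simeq \overline{W}^\sigma \otimes s = \overline{W}^\sigma s$, with $\overline{W}^\sigma \in \mathrm{Irr}(G)_{[1]}$ since conjugation carries $\mathrm{Irr}(G)_{[1]}$ into $\mathrm{Irr}(G)_{[-1]}$ and $\sigma$ carries it back.

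The step I expect to require the most care is the explicit identification of the even and odd parts of $\mathcal{A}_*(G)$ with $C_0 \otimes 1$ and $C_1 \otimes s$, together with the matching of $C_0, C_1$ with the coefficient spaces defining $\mathrm{Irr}(G)_{[0]}$ and $\mathrm{Irr}(G)_{[1]}$ — in other words, pinning down $\mathcal{A}_*(G)$ precisely as a $\mathbb{Z}_2$-graded subcoalgebra of $\mathcal{R}(G) \otimes \mathbb{C}\mathbb{Z}_2$. Once this is in place, the rest of the statement is a routine translation through Proposition \ref{fuscrossed}.
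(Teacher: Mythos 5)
Your proposal is correct, and its overall strategy is the same as the paper's: everything is pulled back from the fusion rules of the crossed product $\mathcal R(G)\rtimes\mathbb C\mathbb Z_2$ (Proposition \ref{fuscrossed}) through the Hopf subalgebra inclusion, with the tensor product identities and $\overline{Ws}\simeq \overline{W}^\sigma s$ obtained exactly as you do. The one place where you diverge is surjectivity: you pin down $\mathcal A_*(G)$ explicitly as the graded subcoalgebra $(C_0\otimes 1)\oplus(C_1\otimes s)$, with $C_0$, $C_1$ the coefficient spans of the alternating tensor powers, and then read off the simple comodules by cosemisimplicity; the paper instead invokes the standard completeness criterion, namely that a family of simple comodules stable under tensor decomposition and conjugation and containing the fundamental comodule $Us$ must exhaust $\Irr(\mathcal A_*(G))$, by the orthogonality relations. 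The two arguments carry the same content (both ultimately rest on $\mathcal A_*(G)$ being generated by the coefficients of $Us$); yours is more explicit and self-contained, while the paper's is shorter and avoids the word-by-word computation of coefficient coalgebras, which in your version requires the (correct, but worth stating) use of commutativity of the representation ring of $G$ to identify $(U\otimes\overline U)^{\otimes l}\otimes U$ with $U\otimes(U\otimes\overline U)^{\otimes l}$ up to isomorphism.
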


\begin{proof}
The existence of the map follows from the discussion before the corollary, while injectivity comes
from Proposition \ref{fuscrossed}.
Note that for $V \in  {\rm Irr}(G)_{[m]}$, $V' \in {\rm Irr}(G)_{[m']}$, the simple constituents of 
$V \otimes V'$ all belong to ${\rm Irr}(G)_{[m+m']}$, and that $V^\sigma \in {\rm Irr}(G)_{[-m]}$.
So the isomorphisms in the statement (that all come from the isomorphisms of Proposition \ref{fuscrossed})
yield decompositions into simple $\mathcal A_*(G)$-comodules. Thus we have a family of simple $\mathcal A_*(G)$-comodules,
stable under decompositions of tensor products and conjugation, and that contains the fundamental
comodule $Us$: we conclude (e.g. from the orthogonality relations \cite{wo1}, \cite{ks})
that we have all the simple comodules.
\end{proof}





\end{document}